\newcommand{\norm}[1]{\left\Vert#1\right\Vert}
\begin{document}

\markboth{B.~WANG, X.~WU, F.~MENG AND Y.~FANG}{Exponential Fourier
collocation methods}

\title{EXPONENTIAL FOURIER COLLOCATION METHODS FOR SOLVING FIRST-ORDER DIFFERENTIAL EQUATIONS}

\author{Bin Wang
\thanks{School of Mathematical Sciences, Qufu Normal
University, Qufu 273165, P.R.China \\ Email: wangbinmaths@gmail.com}
\and Xinyuan Wu
\thanks{Department of
Mathematics, Nanjing University, Nanjing  210093, P.R.China\\
Email: xywu@nju.edu.cn} \and Fanwei Meng
\thanks{School of Mathematical Sciences, Qufu Normal
University, Qufu 273165, P.R.China\\
Email: fwmeng@qfnu.edu.cn} \and Yonglei Fang
\thanks{School of Mathematics  and Statistics, Zaozhuang
University,  Zaozhuang 277160, P.R. China\\
Email: ylfangmath@163.com}}

\maketitle

\begin{abstract}
In this paper, a novel class   of exponential Fourier collocation
methods (EFCMs) is presented
 for solving systems of first-order  ordinary   differential equations. These so-called exponential  Fourier collocation
methods   are based on the variation-of-constants formula,
incorporating a local Fourier expansion of the underlying problem
with collocation methods. We discuss  in detail  the connections of
EFCMs with trigonometric Fourier collocation methods (TFCMs), the
well-known Hamiltonian Boundary Value Methods (HBVMs), Gauss methods
and Radau IIA methods. It  turns out  that the novel EFCMs are an
essential extension of
 these existing methods.   We also analyse the accuracy in preserving the
quadratic invariants and the Hamiltonian energy when the underlying
system is a Hamiltonian system.  Other properties of EFCMs including
the order of approximations and the convergence of fixed-point
iterations are investigated  as well.  The analysis given in this
paper proves further that EFCMs can achieve arbitrarily high order
in a  routine manner which
 allows us to construct higher-order methods for solving systems of
first-order
 ordinary  differential equations conveniently.  We   also derive  a
practical fourth-order EFCM denoted by EFCM(2,2) as  an illustrative
example. The numerical experiments  using
  EFCM(2,2)  are  implemented  in comparison with an existing
fourth-order HBVM, an energy-preserving collocation method and a
fourth-order exponential integrator in the literature. The numerical
results demonstrate the remarkable efficiency  and robustness of the
novel EFCM(2,2).
\end{abstract}

\begin{classification}
65L05, 65L20, 65M20, 65M70.
\end{classification}

\begin{keywords}
First-order differential equations, exponential  Fourier collocation
methods, variation-of-constants formula, structure-preserving
exponential integrators, collocation methods.
\end{keywords}

\section{Introduction}This paper is devoted to analysing and designing novel and efficient numerical integrators
for solving the following first-order   initial value problems
\begin{equation}
u^{\prime}(t)+Au(t)=g(t,u(t)),  \qquad
u(0)=u_0,\qquad t\in[0,t_{\mathrm{end}}],\label{prob}%
\end{equation}
where $g: \mathbb{R} \times\mathbb{R}^{d}\rightarrow \mathbb{R}^{d}$
is an analytic function,
 $A$ is assumed to be a  linear
operator on a Banach space $X$ with a norm $\norm{\cdot}$, and
$(-A)$ is the infinitesimal generator of a strongly continuous
semigroup $e^{-tA}$ on $X$ (see, e.g. \cite{Hochbruck2010}). This
assumption of $A$  means  that    there exist  two  constants $C$
and $\omega$  satisfying
 \begin{equation}\norm{e^{-tA}}_{X\leftarrow X}\leq Ce^{\omega t},\ \ \ \ \ t\geq0.\label{condition A}%
\end{equation}
An analysis about this result can be found in \cite{Hochbruck2010}.
It is noted that if $X$ is chosen as $X=\mathbb{R}^{d}$ or
$X=\mathbb{C}^{d}$, then the linear operator $A$ can be  expressed
by a $d\times d$ matrix.  Accordingly in this case, $e^{-tA}$  is
exactly the matrix exponential function. It also can be observed
that the condition  \eqref{condition A} holds with $\omega=0$
 provided the field of values of $A$ is contained in the right
complex half-plane.  In the special and important case where  $A$ is
skew-Hermitian or Hermitian positive semidefinite,  we have  $C = 1$
and $\omega=0$ in the Euclidean norm, independently of the dimension
$d$. If $A$  originates  from a spatial  discretisation of a partial
differential equation, then the assumption of $A$  leads to temporal
convergence results that are independent of the spatial mesh.

 It is known that  the exact solution of \eqref{prob} can be
represented by the variation-of-constants formula
  \begin{equation}
 u(t)= e^{-tA}u_0+  \int_{0}^t e^{-(t-\tau)A}g(\tau,u(\tau))d\tau.\\
\label{probsystemF3}%
\end{equation}
 For oscillatory problems, the exponential
subsumes the full information on linear oscillations.
 This class of  problems
\eqref{prob} frequently rises in a wide variety of applications
including engineering, mechanics, quantum physics, circuit
simulations, flexible body dynamics and other applied sciences (see,
e.g.
\cite{Brugnano2011-stiff,Grimm2006,Hochbruck1998,Hochbruck2010,wang-2014DCD,wang2015,wu2014-JCP,wu-book}).
 Parabolic partial
differential equations with their spatial discretisations and highly
oscillatory problems are two typical examples of the system
\eqref{prob} (see, e.g.
\cite{Iserles2002a,Iserles2002b,Kassam2005,Khanamiryan2008,Krogstad2003,wu2013-JCP}).
Linearizing   stiff systems $u^{\prime}(t) =F(t,u(t))$ also  yields
examples of the form \eqref{prob}(see, e.g.
\cite{Cox2002,Hochbruck2005,Hochbruck2009}).

Based on the variation-of-constants formula \eqref{probsystemF3},
the numerical scheme for \eqref{prob} is usually constructed by
incorporating the exact propagator of \eqref{prob} in an appropriate
way. For example, interpolating the nonlinearity at the known value
$g(0,u_0)$ yields the exponential Euler approximation for
\eqref{probsystemF3}. Approximating the functions arising by
rational approximations leads to implicit or semi-implicit
Runge--Kutta methods, Rosenbrock methods or W-schemes. Recently, the
construction, analysis, implementation and application of
exponential integrators have been studied by many researchers, and
we refer the
 reader to \cite{Berland-2005,Caliari-2009,Calvo-2006,Celledoni-2008,Grimm2006,Ostermann2006,wu2012-6},
for example. Exponential integrators make explicit use of the
quantity $Au$ of \eqref{prob},  and a systematic survey of
exponential integrators is referred to \cite{Hochbruck2010}.

Based on Lagrange interpolation polynomials, exponential Runge-Kutta
methods of collocation type are constructed and their convergence
properties are analysed in \cite{Hochbruck2005-1}. In
\cite{wang-2014}, the authors developed and researched a novel type
of trigonometric Fourier collocation methods (TFCMs) for
second-order oscillatory differential equations
$q^{\prime\prime}(t)+Mq(t)=f(q(t))$ with a principal frequency
matrix $M\in\mathbb{R}^{d\times d}$. These new trigonometric Fourier
collocation methods take full advantage of the special structure
brought by the linear term $Mq$, and its construction incorporates
the idea of collocation methods, the variation-of-constants formula
and the local Fourier expansion of the system.   The results of
numerical experiments in \cite{wang-2014} showed  that the
trigonometric
 Fourier collocation methods are
much more efficient in comparison with some alternative approaches
that have previously appeared in the literature. On the basis of the
{work in \cite{Hochbruck2005-1,wang-2014},} in this paper we make an
effort to conduct the research of novel exponential
 Fourier collocation methods (EFCMs)  for efficiently solving first-order differential
 equations \eqref{prob}. The construction  of the novel  EFCMs  incorporates
the exponential integrators, the collocation methods, and the local
Fourier expansion of the system. Moveover, EFCMs
 can be of an arbitrarily high order, and when $A\rightarrow0$,   EFCMs
reduce to  the well-known Hamiltonian Boundary Value methods (HBVMs)
which have been studied by many researchers (see, e.g.
\cite{Brugnano2010,Brugnano2011,Brugnano2012}). It is also shown in
this paper that  EFCMs are an extension of Gauss methods, Radau IIA
methods  and TFCMs.

The paper is organized as follows. We first formulate the scheme of
EFCMs in Section \ref{sec:Computations}. Section
\ref{sec:Connections} discusses the connections of the novel EFCMs
with HBVMs, Gauss methods, Radau IIA methods   and TFCMs.  In
Section \ref{sec:Analysis of the methods}, we  analyse the
properties of EFCMs.  Section \ref{numerical experiments}  is
concerned with constructing a practical EFCM and reporting four
numerical experiments to demonstrate the excellent qualitative
behavior of the novel approximation. Section \ref{sec:conclusions}
includes some
 conclusions.

\section{Formulation of EFCMs} \label{sec:Computations}
In this section, we  present the formulation of exponential Fourier
collocation methods (EFCMs) for systems of first-order differential
equations \eqref{prob}.

\subsection{Local Fourier expansion} \label{sec: Local
Fourier expansion} We first restrict the first-order differential
equations \eqref{prob}
 to the interval $[0,h]$ with any $h>0$:
\begin{equation}
u'(t)+Au(t)=g(t,u(t)),  \qquad
u(0)=u_0,\qquad t\in[0,h].\label{prob h}%
\end{equation}
Consider  the shifted Legendre polynomials
$\{\widehat{P}_j\}_{j=0}^{\infty}$  satisfying
$$\int_{0}^{1}\widehat{P}_i(x)\widehat{P}_j(x)dx=\delta_{ij},\qquad \deg  \big(\widehat{P}_j\big)=j,\qquad i,j\geq0,$$
where $\delta_{ij}$ is the Kronecker symbol.  We then expand the
right-hand-side function of \eqref{prob h} as follows:
\begin{equation}g(\xi h,u(\xi h))=\sum\limits_{j=0}^
{\infty}\widehat{P}_j(\xi )\kappa_j(h,u), \ \ \xi \in[0,1]; \ \
\kappa_j(h,u):=\int_{0}^{1}\widehat{P}_j(\tau)g(\tau h,u(\tau h))d\tau.\label{fq}%
\end{equation}
The system \eqref{prob h} now can be rewritten as
\begin{equation}
u'(\xi h)+Au(\xi h)=\sum\limits_{j=0}^ {\infty}\widehat{P}_j(\xi
)\kappa_j(h,u),  \qquad
u(0)=u_0.\label{prob series h}%
\end{equation}
The next theorem gives its  solution.
\begin{theorem}
\label{vcf expansion}  The solution of \eqref{prob h} can be
expressed by
\begin{equation}
\begin{aligned} u(t) =\varphi_{0}(-tA)u_0+t\sum\limits_{j=0}^ {\infty}I_{j}(tA)\kappa_j(t,u),
\end{aligned}
\label{expansion Solution}%
\end{equation}
where $t\in[0,h]$ and \begin{equation}
\begin{aligned} &I_{j}(tA):=\int_{0}^1
\widehat{P}_j(z)e^{-(1-z)tA}dz =\sqrt{2j+1}\sum\limits_{k=0}^
{j}(-1)^{j+k}\frac{(j+k)!}{k!(j-k)!}\varphi_{k+1}(-tA).
\end{aligned}
\label{Ij}%
\end{equation}
Here  the $\varphi$-functions (see, e.g.
\cite{Hochbruck1998,Hochbruck2005,Hochbruck2010,Hochbruck2009}) are
defined by:
\begin{equation*}
\varphi_0(z)=e^{z},\ \ \varphi_k(z)=\int_{0}^1
e^{(1-\sigma)z}\frac{\sigma^{k-1}}{(k-1)!}d\sigma, \ \ k=1,2,\ldots.
\label{phi}%
\end{equation*}
\end{theorem}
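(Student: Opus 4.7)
The plan is to derive the formula by substituting the Fourier--Legendre expansion (2.2) directly into the variation-of-constants formula (1.3) and then recognizing the resulting scalar integrals as the matrix $\varphi$-functions introduced at the end of the statement. First I would restrict (1.3) to the interval $[0,t]\subset[0,h]$ and change variables via $\tau=zt$, which yields
\begin{equation*}
u(t)=e^{-tA}u_0+t\int_0^1 e^{-(1-z)tA}\,g(zt,u(zt))\,dz.
\end{equation*}
Noting that $\varphi_0(-tA)=e^{-tA}$ by the definition of $\varphi_0$, this already produces the first term of the claimed expression.

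Next, I would apply the local Fourier expansion (2.2), used on the subinterval $[0,t]$ rather than $[0,h]$, to write $g(zt,u(zt))=\sum_{j\geq 0}\widehat{P}_j(z)\kappa_j(t,u)$ where $\kappa_j(t,u)=\int_0^1\widehat{P}_j(\tau)g(\tau t,u(\tau t))\,d\tau$. Substituting this expansion into the integral and interchanging sum and integral (which is justified because $g$ is analytic, so the Legendre--Fourier series converges uniformly in $z$ on $[0,1]$, and $e^{-(1-z)tA}$ is uniformly bounded there by (1.2)) gives
\begin{equation*}
t\int_0^1 e^{-(1-z)tA}\,g(zt,u(zt))\,dz = t\sum_{j=0}^{\infty}\Bigl(\int_0^1 \widehat{P}_j(z)\,e^{-(1-z)tA}\,dz\Bigr)\kappa_j(t,u),
\end{equation*}
which by the first equality in (2.5) is exactly $t\sum_j I_j(tA)\kappa_j(t,u)$, completing the representation (2.4).

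It then remains to verify the closed-form identity in (2.5), i.e.\ the second equality expressing $I_j(tA)$ in terms of the $\varphi$-functions. For this I would use the standard explicit expansion of the normalized shifted Legendre polynomial, $\widehat{P}_j(z)=\sqrt{2j+1}\sum_{k=0}^j(-1)^{j+k}\binom{j}{k}\binom{j+k}{k}z^k$, together with the elementary identity
\begin{equation*}
\int_0^1 z^k e^{-(1-z)tA}\,dz = k!\,\varphi_{k+1}(-tA),
\end{equation*}
which follows directly from the integral definition of $\varphi_{k+1}$ after identifying $\sigma$ with $z$. Combining these and simplifying $\binom{j}{k}\binom{j+k}{k}\,k!=(j+k)!/(k!\,(j-k)!)$ yields the stated representation of $I_j(tA)$.

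The main obstacle I anticipate is not in the computation itself, which is essentially mechanical, but in rigorously justifying the term-by-term integration of the Legendre series; here I would appeal to the analyticity of $g$ and the boundedness estimate (1.2) to obtain uniform convergence on $[0,1]$, after which Fubini/dominated convergence permits swapping sum and integral unambiguously.
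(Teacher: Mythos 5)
Your proposal is correct and follows essentially the same route as the paper: change of variables in the variation-of-constants formula, term-by-term substitution of the Legendre--Fourier expansion on $[0,t]$, and then the explicit shifted-Legendre expansion combined with $\int_0^1 z^k e^{-(1-z)tA}\,dz = k!\,\varphi_{k+1}(-tA)$ to obtain the closed form for $I_j(tA)$. The only difference is that you explicitly flag and justify the interchange of sum and integral, which the paper performs without comment.
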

\begin{proof}
It follows from  the variation-of-constants formula
\eqref{probsystemF3} that
\begin{equation*}
\begin{aligned} u(t)&=e^{-tA}u_0+  \int_{0}^t e^{-(t-\tau)A}g(\tau,u(\tau))d\tau\\
&=\varphi_{0}(-tA)u_0+ t \int_{0}^1 e^{-(1-z)tA}g(zt,u(zt))dz.
\end{aligned}
\label{prob expansion Solution}%
\end{equation*}
Replacing  the function $g(zt,u(zt))$ in the integral by \eqref{fq}
yields
\begin{equation*}
\begin{aligned} u(t)
&=\varphi_{0}(-tA)u_0+ t \int_{0}^1 e^{-(1-z)tA}\sum\limits_{j=0}^
{\infty}\widehat{P}_j(z)\kappa_j(t,u)dz\\
&=\varphi_{0}(-tA)u_0+ t \sum\limits_{j=0}^
{\infty}\int_{0}^1\widehat{P}_j(z) e^{-(1-z)tA}dz\kappa_j(t,u),
\end{aligned}%
\end{equation*}
which gives the formula \eqref{expansion Solution}  by letting
$I_{j}(tA)=\int_{0}^1 \widehat{P}_j(z)e^{-(1-z)tA}dz$.

According to the definition of   shifted Legendre polynomials in the
interval $[0,1]$:
\begin{equation}
\widehat{P}_j(x)=(-1)^j\sqrt{2j+1}\sum\limits_{k=0}^ {j}{j
\choose{k}}{j+k \choose{k}}(-x)^k,\qquad j=0,1,\ldots,\qquad
x\in[0,1],
\label{Pj}%
\end{equation}
we arrive at \begin{equation*}\label{Ijta}
\begin{aligned} &I_{j}(tA)
=\int_{0}^1
\widehat{P}_j(z)e^{-(1-z)tA}dz\\
=&\int_{0}^1 (-1)^j\sqrt{2j+1}\sum\limits_{k=0}^ {j}{j
\choose{k}}{j+k \choose{k}}(-z)^k e^{-(1-z)tA}dz\\
=&\sqrt{2j+1}\sum\limits_{k=0}^ {j}(-1)^{j+k}{j \choose{k}}{j+k
\choose{k}}\int_{0}^1 z^k e^{-(1-z)tA}dz\\
=&\sqrt{2j+1}\sum\limits_{k=0}^
{j}(-1)^{j+k}\frac{(j+k)!}{k!(j-k)!}\varphi_{k+1}(-tA).
\end{aligned}%
\end{equation*}
\end{proof}

\subsection{Discretisation} \label{subsec:Discretization}
The authors in \cite{Brugnano2012} made use of interpolation
quadrature formulae and gave the discretisation   for initial value
problems. Following \cite{Brugnano2012},   two tools are coupled in
this part. We first   truncate the local Fourier expansion
  after a finite number of
terms and then  compute the coefficients of the expansion   by a
suitable quadrature formula.

We  now  consider truncating the Fourier expansion, a technique
which originally appeared in
  \cite{Brugnano2012}.   This can be achieved by
truncating the series \eqref{expansion Solution} after $n$
($n\geq2$) terms with the stepsize $h$ and $V:=hA$:
\begin{equation}
\begin{aligned} &\tilde{u}(h)=\varphi_{0}(-V)u_0+h\sum\limits_{j=0}^
{n-1}I_{j}(V)\kappa_j(h,\tilde{u}),
\end{aligned}
\label{basic method}%
\end{equation}
which satisfies the following initial value problem:
\begin{equation*}
\begin{aligned}
 & \tilde{u}'(\xi h)+A\tilde{u}(\xi h)=\sum\limits_{j=0}^
{n-1}\widehat{P}_j(\xi )\kappa_j(h,\tilde{u}),\ \ \ \
\tilde{u}(0)=u_{0}.\end{aligned}
\label{truncating H-s}%
\end{equation*}

The key challenge in designing practical methods  is  how to deal
with $\kappa_j(h,\tilde{u})$ effectively. To this end, we introduce
a quadrature formula using $k$   abscissae $0\leq c_1\leq\ldots\leq
c_k\leq1$ {and being exact for polynomials of degree up to $m-1$}.
It is required that $m\geq k$ in this paper, and we note that many
existed quadrature formulae satisfy this requirement, such as the
well-known Gauss--Legendre
 quadrature  and the Radau quadrature.  We  thus  obtain an approximation of the form
\begin{equation}
\begin{aligned} \kappa_j(h,\tilde{u})\approx \sum\limits_{l=1}^
{k}b_l\widehat{P}_j(c_l)g(c_l h,\tilde{u}(c_l h)),\ \ \ \
j=0,1,\ldots, n-1,
\end{aligned}
\label{Computate gamma}%
\end{equation} where $b_l$  for $l=1,2,\ldots,k$  are the
quadrature weights.  It is noted that since the number of the
integrals  $\kappa_j(h,\tilde{u})$ is $n$,  it is assumed that
$k\geq n$. Therefore, we  have  $m\geq n$.

Since the quadrature is exact for polynomials of degree $m-1$, its
remainder depends on the $m$-th derivative of the integrand
$\widehat{P}_j(\tau)g(\tau h,u(\tau h))$ with respect to $\tau$.
Consequently,  the approximation gives
\begin{equation*}
\begin{aligned}
&\Delta_j(h,\tilde{u}):= \kappa_j(h,\tilde{u})-\sum\limits_{l=1}^
{k}b_l\widehat{P}_j(c_l)g(c_lh,\tilde{u}(c_l h))\\
=&\int_{0}^{1}\widehat{P}_j(\tau)g(\tau h,u(\tau
h))d\tau-\sum\limits_{l=1}^
{k}b_l\widehat{P}_j(c_l)g(c_lh,\tilde{u}(c_l h))\\
=&C\int_{0}^{1}\dfrac{d^{m}\Big(\widehat{P}_j(\tau)g(\tau h,u(\tau
h))\Big)}{d\tau^{m} }|_{\tau=\zeta}d\tau,
\end{aligned}
\end{equation*}
where $C$ is a constant, and $\zeta\ (\zeta\in[0,1])$ depends on
$\tau$. Taking account of $\widehat{P}^{(k)}_j(\tau)=0$ for $k>j$,
we obtain
\begin{equation*}
\begin{aligned}
\Delta_j(h,\tilde{u})=&C\int_{0}^{1}\widehat{P}_j(\zeta)\hat{g}^{(m)}(\zeta
h)d\tau h^m+Cm\int_{0}^{1}\widehat{P}'_j(\zeta)\hat{g}^{(m-1)}(\zeta
h)d\tau h^{m-1}\\
&+\cdots+C{m
\choose{j}}\int_{0}^{1}\widehat{P}^{(j)}_j(\zeta)\hat{g}^{(m-j)}(\zeta
h)d\tau h^{m-j}=\mathcal{O}(h^{m-j}),\\
&\qquad \qquad \qquad \qquad \qquad \qquad \qquad \qquad \qquad
\qquad j=0,1,\ldots,n-1
\end{aligned}
\end{equation*}
with the notation $\hat{g}^{(k)}(\zeta h)=   g^{(k)}(\zeta h,u(\zeta
h)) .$
 This  guarantees that  each $\Delta_j(h,\tilde{u})$ has good accuracy for any $j=0,1,\ldots,n-1$. Choosing $k$ large enough, along with a
suitable choice of   $c_l,\ b_l$ for $\ l=1,2,\ldots,k$,  allows us
to approximate the given integral $\kappa_j(h,\tilde{u})$ to any
degree of accuracy.

With \eqref{basic method} and  \eqref{Computate gamma}, it is
natural to consider the following numerical scheme
\begin{equation*}
\begin{aligned} &v(h)=\varphi_{0}(-V)u_0+h\sum\limits_{j=0}^
{n-1}I_{j}(V)\sum\limits_{l=1}^ {k}b_l\widehat{P}_j(c_l)g(c_l
h,v(c_l h)),
\end{aligned}
\label{quadrature basic method}%
\end{equation*}
which exactly solves  the  initial value
 problem as follows:
\begin{equation}
\begin{aligned}
 & v'(\xi h)=-Av(\xi h)+\sum\limits_{j=0}^
{n-1}\widehat{P}_j(\xi )\sum\limits_{l=1}^
{k}b_l\widehat{P}_j(c_l)g(c_l h,v(c_l h)),\ \ \ \  v(0)=u_{0}.
\end{aligned}
\label{quadrature truncating H-s}%
\end{equation}
It  follows from \eqref{quadrature truncating H-s}  that $v(c_i h)$
for $ i=1,2,\ldots,k$ satisfy the following first-order differential
equations:
\begin{equation}
\begin{aligned}
&v'(c_i h)+Av(c_i h)=\sum\limits_{j=0}^
{n-1}\widehat{P}_j(c_i)\sum\limits_{l=1}^
{k}b_l\widehat{P}_j(c_l)g(c_l h,v(c_l h)),\ \ \  v(0)=u_{0}.\\
\end{aligned}
\label{discrete quadrature truncating H-s}%
\end{equation}
Letting $v_i=v(c_i h),$ \eqref{discrete quadrature truncating H-s}
can be solved by the variation-of-constants formula
\eqref{probsystemF3}  of the form:
\begin{equation*}
\begin{aligned}
v_i=&\varphi_{0}(-c_iV)u_0+c_ih\sum\limits_{j=0}^ {n-1}I_{j,c_i}(V)
\sum\limits_{l=1}^ {k}b_l\widehat{P}_j(c_l)g(c_l h,v_l),\ \ \
i=1,2,\ldots,k,
\end{aligned}
\label{discrete quadrature method}%
\end{equation*}
 where
 {\begin{equation}
\begin{aligned} &I_{j,c_i}(V):=\int_{0}^1\widehat{P}_j(c_iz)e^{-(1
-z)c_iV}dz\\
=&\int_{0}^1 (-1)^j\sqrt{2j+1}\sum\limits_{k=0}^ {j}{j
\choose{k}}{j+k \choose{k}}(-c_iz)^k e^{-(1-z)c_iV}dz\\
=&(-1)^j\sqrt{2j+1}\sum\limits_{k=0}^ {j}(-c_i)^k{j \choose{k}}{j+k
\choose{k}}\int_{0}^1 z^k e^{-(1-z)c_iV}dz\\
=&(-1)^j\sqrt{2j+1}\sum\limits_{k=0}^
{j}(-c_i)^k\frac{(j+k)!}{k!(j-k)!}\varphi_{k+1}(-c_iV).
\end{aligned}
\label{Ijci}%
\end{equation}}
\subsection{The exponential  Fourier collocation methods} \label{subsec:The methods}
 We are now in a position to present the novel  exponential  Fourier collocation methods
for systems of  first-order   differential equations \eqref{prob}.
\begin{definition}
\label{numerical method}  The $k$-stage  exponential  Fourier
collocation
 method with an integer $n$ (denoted by EFCM(k,n))
 for integrating systems of   first-order differential
 equations \eqref{prob}   is
defined by
\begin{equation}
\begin{aligned} v_i=&\varphi_{0}(-c_iV)u_0+c_ih\sum\limits_{l=1}^ {k}b_l\Big(\sum\limits_{j=0}^ {n-1}I_{j,c_i}(V)
\widehat{P}_j(c_l)\Big)g(c_l h,v_l),\
i=1,2,\ldots,k,\\
v(h)&=\varphi_{0}(-V)u_0+h\sum\limits_{l=1}^
{k}b_l\Big(\sum\limits_{j=0}^
{n-1}I_{j}(V)\widehat{P}_j(c_l)\Big)g(c_l h,v _l),
\end{aligned}
\label{methods}%
\end{equation}
where $h$ is the stepsize, $V:=hA$, $\widehat{P}_j$  for
$j=0,1,\ldots,n-1$ are defined by \eqref{Pj},  and $c_l,\ b_l$ for
$l=1,2,\ldots,k$  are the node points and the quadrature weights of
a quadrature formula, respectively. Here, $n$ is an integer which is
required to satisfy the condition: $2\leq n\leq k$. { $I_{j}(V)$ and
$I_{j,c_i}(V)$ are  determined  by
\begin{equation*}
\begin{aligned} &I_{j}(V) =\sqrt{2j+1}\sum\limits_{k=0}^
{j}(-1)^{j+k}\frac{(j+k)!}{k!(j-k)!}\varphi_{k+1}(-V),\\
&I_{j,c_i}(V) =(-1)^j\sqrt{2j+1}\sum\limits_{k=0}^
{j}(-c_i)^k\frac{(j+k)!}{k!(j-k)!}\varphi_{k+1}(-c_iV).
\end{aligned}
\end{equation*}}
\end{definition}
\begin{remark}
Clearly, it can be observed that  the EFCM(k,n) defined by
\eqref{methods} exactly integrates  the homogeneous linear system
$u'+Au=0$, thus it is trivially  A-stable.
 The EFCM(k,n)    \eqref{methods} approximates the solution of \eqref{prob} in the time interval $[0,h]$. Obviously,
the obtained result $v(h)$  can be considered as  the initial
condition for a new initial value problem and $u(t)$ can be
approximated in the time interval $[h,2h]$. In general, the
EFCM(k,n) can be extended to the approximation of the solution in an
arbitrary interval $[0,Nh]$,  where $N$ is a positive integer.
\end{remark}

\begin{remark}
The novel EFCM(k,n) \eqref{methods} developed here  is a kind of
 exponential integrator which requires  the
approximation of products of  $\varphi$-functions with vectors. It
is noted that if $A$ has a simple structure, it is possible to
compute the $\varphi$-functions in a fast and reliable way.
Moveover, many different approaches to evaluating this action in an
efficient way have been proposed in the literature, see, e.g.
\cite{Higham2009,Higham2011,Berland2007,Higham2010acta,Hochbruck1997,Hochbruck2010,Lubich-book,Moler-2003}.
Furthermore,  all the matrix functions appearing in the EFCM(k,n)
\eqref{methods}  only need to be calculated once in the actual
implementation for the given stepsize $h$. In Section \ref{numerical
experiments}, we will compare our novel methods with some
traditional collocation methods (which do not require the evaluation
of matrix functions) by four experiments. For each problem, we will
display the work precision diagram in which the global error is
plotted versus the execution time. The numerical results given in
Section \ref{numerical experiments} demonstrate the efficiency of
our novel approximation.

\end{remark}

\section{Connections with some existing methods}
\label{sec:Connections}

 So far  various effective methods have been developed for solving
first-order differential equations and this section is devoted to
exploring the connections between our novel  EFCMs and some other
existing methods in the literature.  It turns out that some existing
traditional methods can be gained by letting $A\rightarrow 0$ in the
corresponding EFCMs or by applying EFCMs to special second-order
differential equations.

\subsection{Connections with HBVMs and Gauss methods}

 Hamiltonian Boundary Value methods (HBVMs) are an interesting
class of integrators, which exactly preserve energy of polynomial
Hamiltonian systems (see, e.g.
\cite{Brugnano2010,Brugnano2011,Brugnano2012}). We first consider
the  connection between EFCMs and HBVMs.

 It can be observed that from \eqref{Ijci} that when  $A\rightarrow
0$, $I_{j}(V)$ and $I_{j,c_i}(V)$ in \eqref{methods} become
\begin{equation*}
\begin{aligned}
\tilde{I}_{j} :&=I_{j}(0)=\int_{0}^1
 \widehat{P}_j(z)dz=\left\{
\begin{aligned}
 &1,\ j=0,\\
 &0, \ j\geq1,\end{aligned}\right.\\
 \tilde{I}_{j,c_i}
:&=I_{j,c_i}(0)=\int_{0}^1
 \widehat{P}_j(c_iz)dz.\\
\end{aligned}
\end{equation*}
This can be summed up in the following result.
\begin{theorem}\label{HBVM thm}
When $A\rightarrow 0$, the EFCM(k,n) defined by \eqref{methods}
reduces to
\begin{equation}
\begin{aligned} v_i=&u_0+c_ih\sum\limits_{l=1}^ {k}b_l\Big(\sum\limits_{j=0}^ {n-1} \tilde{I}_{j,c_i}
\widehat{P}_j(c_l)\Big)g(c_l h,v_l),\
i=1,2,\ldots,k,\\
v(h)&=u_0+h \sum\limits_{l=1}^ {k}b_l g(c_l h,v _l),
\end{aligned}
\label{methods0}%
\end{equation}
which can be rewritten as a $k$-stage Runge-Kutta method with the
following Butcher tableau
\begin{equation}\label{Radau datas}
\begin{tabular}
[c]{c|ccc}%
$c_{1}$ &  \\
$\vdots$ & $ $ &  $\bar{A}=(\bar{a}_{ij})_{k\times
k}=\Big(b_{j}\sum\limits_{l=0}^ {n-1}
\widehat{P}_l(c_j)\int_{0}^{c_i}
 \widehat{P}_l( \tau)d\tau\Big)_{k\times k}$  \\
$c_{k}$ &  \\\hline & $\raisebox{-1.3ex}[1.0pt]{$\ b_1$}$ &
$\raisebox{-1.3ex}[1.0pt]{$\cdots$}$ &
$\raisebox{-1.3ex}[1.0pt]{$b_k$}$%
\end{tabular}
\end{equation}
This method is exactly the  Hamiltonian Boundary Value Method
HBVM(k,n) using  the discretisation researched in
\cite{Brugnano2010,Brugnano2011,Brugnano2012} for the first-order
system $$u^{\prime}(t)=g(t,u(t)), \ u(0)=u_0.$$
\end{theorem}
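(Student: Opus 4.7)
The plan is to pass to the limit $A\to 0$ in the identities of Definition \ref{numerical method} and then identify the resulting Runge--Kutta tableau with the one of HBVM(k,n) derived in \cite{Brugnano2010,Brugnano2011,Brugnano2012}.

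First, I would compute the limiting values of the operator coefficients. Since $V=hA\to 0$ and $\varphi_0(0)=1$, one has $\varphi_0(-V)\to I$, so the homogeneous propagator disappears. From the integral representations
\begin{equation*}
I_{j}(V)=\int_{0}^{1}\widehat{P}_j(z)\,e^{-(1-z)V}\,dz,\qquad
I_{j,c_i}(V)=\int_{0}^{1}\widehat{P}_j(c_i z)\,e^{-(1-z)c_i V}\,dz,
\end{equation*}
continuity of the integrand in $V$ (uniformly on $[0,1]$) gives the limits stated just before the theorem; in particular, orthogonality of $\{\widehat{P}_j\}$ against the constant $\widehat{P}_0\equiv 1$ yields $\tilde{I}_{j}=\delta_{j0}$, while the substitution $\tau=c_i z$ gives $\tilde{I}_{j,c_i}=\tfrac{1}{c_i}\int_{0}^{c_i}\widehat{P}_j(\tau)\,d\tau$.

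Second, I would substitute these limits directly into \eqref{methods}. In the update for $v(h)$, only the $j=0$ term of the outer sum survives, and because $\widehat{P}_0\equiv 1$ this produces the classical quadrature formula $v(h)=u_0+h\sum_l b_l\,g(c_l h,v_l)$. The internal stages specialise to the first line of \eqref{methods0} without further manipulation.

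Third, I would recognise \eqref{methods0} as a Runge--Kutta method by inspection of the stage equations. The effective Butcher coefficients are
\begin{equation*}
\bar{a}_{il}=c_i\,b_l\sum_{j=0}^{n-1}\tilde{I}_{j,c_i}\widehat{P}_j(c_l)
=b_l\sum_{j=0}^{n-1}\widehat{P}_j(c_l)\int_{0}^{c_i}\widehat{P}_j(\tau)\,d\tau,
\end{equation*}
where the factor $c_i$ cancels with the $1/c_i$ that came out of the change of variables; this is precisely the matrix in \eqref{Radau datas}. Finally, matching this tableau with the one obtained in \cite{Brugnano2010,Brugnano2011,Brugnano2012} identifies the scheme with HBVM(k,n) applied to $u'(t)=g(t,u(t))$: that construction uses the same truncated shifted-Legendre expansion together with the same $k$-point quadrature, which is exactly what the $A=0$ case of the derivation in Section~\ref{sec:Computations} produces.

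The argument is essentially a limit computation plus bookkeeping; the only point that could cause confusion is the rescaling $\tau=c_iz$ that absorbs the $c_i$ prefactor in the internal stages, so I would present that substitution explicitly. No genuine analytical obstacle is expected.
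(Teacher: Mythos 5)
Your proposal is correct and follows essentially the same route as the paper, which simply records the limits $\tilde{I}_{j}=I_j(0)=\delta_{j0}$ and $\tilde{I}_{j,c_i}=I_{j,c_i}(0)=\int_0^1\widehat{P}_j(c_i z)\,dz$ immediately before the theorem and then states the result as a summary. Your explicit substitution $\tau=c_i z$ to absorb the $c_i$ prefactor into $\int_0^{c_i}\widehat{P}_j(\tau)\,d\tau$ is exactly the bookkeeping the paper leaves implicit in passing from \eqref{methods0} to the tableau \eqref{Radau datas}.
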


From the property of HBVM(k,n) given in \cite{Brugnano2012}, it
follows that  HBVM(k,k) reduces to a $k$-stage Gauss-Legendre
collocation method when  a Gaussian distribution of the nodes
$(c_1,\cdots,c_k)$ is used.  In view of this   and as an
straightforward consequence of Theorem \ref{HBVM thm}, we obtain the
connection between EFCMs and Gauss methods.  This result is
described below.

\begin{theorem}\label{invariant RKN thm}
Under the condition that  $c_l,\ b_l$ for $ l=1,2,\ldots,k$  are
chosen respectively as the node points and the quadrature weights of
a $k$-point Gauss--Legendre
 quadrature over the interval $[0,1]$,  then the EFCM(k,k) defined by \eqref{methods}
 reduces to the $k$-stage Gauss method presented in \cite{hairer2006} when   $A\rightarrow
 0$.
\end{theorem}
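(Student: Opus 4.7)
The plan is to derive the result as a corollary of Theorem \ref{HBVM thm}, and then reconcile the reduced scheme with the standard $k$-stage Gauss collocation method. First I would set $n=k$ and let $A\to 0$ in the EFCM(k,k). By Theorem \ref{HBVM thm}, the method reduces to the Runge--Kutta scheme with Butcher tableau whose entries are
\begin{equation*}
\bar{a}_{ij}=b_{j}\sum_{l=0}^{k-1}\widehat{P}_{l}(c_{j})\int_{0}^{c_{i}}\widehat{P}_{l}(\tau)\,d\tau,\qquad i,j=1,\ldots,k,
\end{equation*}
with $b_{j}$ the Gauss--Legendre weights. It therefore suffices to identify this tableau with the one of the $k$-stage Gauss collocation method.

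The core identity I would establish is
\begin{equation*}
\bar{a}_{ij}=\int_{0}^{c_{i}}\ell_{j}(\tau)\,d\tau,
\end{equation*}
where $\ell_{j}$ is the Lagrange interpolation polynomial of degree $k-1$ at the Gauss--Legendre nodes, characterised by $\ell_{j}(c_{m})=\delta_{jm}$. Since both $\ell_{j}$ and each $\widehat{P}_{l}$ for $l\leq k-1$ are polynomials of degree at most $k-1$, and $\{\widehat{P}_{l}\}_{l=0}^{k-1}$ is an orthonormal basis of that space with respect to the $L^{2}[0,1]$ inner product, I would expand
\begin{equation*}
\ell_{j}(\tau)=\sum_{l=0}^{k-1}\alpha_{jl}\widehat{P}_{l}(\tau),\qquad \alpha_{jl}=\int_{0}^{1}\ell_{j}(\tau)\widehat{P}_{l}(\tau)\,d\tau.
\end{equation*}
Because $\ell_{j}\widehat{P}_{l}$ has degree at most $2k-2\leq 2k-1$, the $k$-point Gauss--Legendre quadrature is exact on this product, giving $\alpha_{jl}=\sum_{m=1}^{k}b_{m}\ell_{j}(c_{m})\widehat{P}_{l}(c_{m})=b_{j}\widehat{P}_{l}(c_{j})$. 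Integrating the expansion from $0$ to $c_{i}$ then yields precisely $\bar{a}_{ij}$.

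Once this identity is in hand, the reduced scheme is the Runge--Kutta method with abscissae $c_{1},\ldots,c_{k}$ and coefficients $\bar{a}_{ij}=\int_{0}^{c_{i}}\ell_{j}(\tau)\,d\tau$, $b_{j}$, which by construction is the $k$-stage collocation method based on the Gauss--Legendre nodes, i.e.\ the classical $k$-stage Gauss method of \cite{hairer2006}. The step $v(h)=u_{0}+h\sum_{l=1}^{k}b_{l}g(c_{l}h,v_{l})$ inherited from \eqref{methods0} is already consistent with the Butcher row of weights, so the identification is complete. The only delicate point is the orthogonal expansion of $\ell_{j}$ combined with the quadrature exactness argument; everything else is bookkeeping. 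Alternatively, one may bypass this calculation entirely by invoking the known fact, recorded in \cite{Brugnano2010,Brugnano2011,Brugnano2012}, that HBVM(k,k) with Gaussian nodes coincides with the $k$-stage Gauss method, and couple it directly with Theorem \ref{HBVM thm}; I would include both routes so that the paper is self-contained.
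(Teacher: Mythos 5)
Your proposal is correct, and your primary route is genuinely different from what the paper does. The paper gives no computation at all for this theorem: it simply invokes the known property, recorded in \cite{Brugnano2012}, that HBVM($k$,$k$) with Gaussian nodes coincides with the $k$-stage Gauss--Legendre collocation method, and combines that citation with Theorem \ref{HBVM thm} to conclude. Your first route instead verifies the coincidence directly: you expand the Lagrange cardinal polynomial $\ell_j$ in the orthonormal shifted Legendre basis, use the exactness of the $k$-point Gauss quadrature on products of degree at most $2k-2$ to evaluate the coefficients as $\alpha_{jl}=b_j\widehat{P}_l(c_j)$, and integrate to obtain $\bar{a}_{ij}=\int_0^{c_i}\ell_j(\tau)\,d\tau$, which is precisely the collocation tableau. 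The computation checks out (the degree bound $2k-2\leq 2k-1$ is exactly what makes the quadrature exact, and the final-stage weights $b_j$ match trivially), so your argument makes the theorem self-contained rather than dependent on an external result about HBVMs; the cost is a page of bookkeeping that the paper avoids. Since you also include the citation-based route as an alternative, your proposal subsumes the paper's argument. One small remark: the same Lagrange-expansion identity is essentially what underlies the $WX_kW^{-1}$ manipulation the paper carries out explicitly for the Radau case in Theorem \ref{Radau thm}, so your calculation unifies the two cases.
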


\subsection{Connection between EFCMs and Radau IIA methods}

The following  theorem  states the connection between EFCMs and
Radau IIA methods.

 \begin{theorem}\label{Radau thm}
Choose $c_l,\ b_l$ for $l=1,2,\ldots,k$   respectively as  the node
points and the weights of the Radau-right quadrature formula. Then
the EFCM(k,k) defined by \eqref{methods}
 reduces to a $k$-stage  Radau IIA method  presented in \cite{hairer-stiff} when  $A\rightarrow 0$.
\end{theorem}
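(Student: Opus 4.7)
The plan is to reduce the problem, via Theorem \ref{HBVM thm}, to an identification of a specific Runge--Kutta tableau with the $k$-stage Radau IIA tableau. Setting $n=k$ in Theorem \ref{HBVM thm} and letting $A\to 0$ in EFCM(k,k), the scheme becomes the Runge--Kutta method with coefficients
\begin{equation*}
\bar{a}_{ij}=b_{j}\sum_{l=0}^{k-1}\widehat{P}_{l}(c_{j})\int_{0}^{c_{i}}\widehat{P}_{l}(\tau)\,d\tau,\qquad i,j=1,\ldots,k,
\end{equation*}
with weights $b_1,\ldots,b_k$ and abscissae $c_1,\ldots,c_k$. It therefore suffices to show that, when $c_l,b_l$ come from the $k$-point Radau-right formula, this tableau coincides with the tableau of the $k$-stage Radau IIA method, which is the collocation Runge--Kutta scheme at those same nodes with $a_{ij}^{\mathrm{RIIA}}=\int_{0}^{c_i}L_{j}(\tau)\,d\tau$, where $L_1,\ldots,L_k$ is the Lagrange basis at $c_1,\ldots,c_k$.

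The key ingredient is a discrete orthogonality. Because the $k$-point Radau-right quadrature has algebraic degree of exactness $2k-2$, and because $\widehat{P}_{l}\widehat{P}_{m}$ has degree $l+m\leq 2k-2$ for $0\leq l,m\leq k-1$, the quadrature integrates these products exactly, yielding
\begin{equation*}
\sum_{j=1}^{k} b_{j}\,\widehat{P}_{l}(c_{j})\,\widehat{P}_{m}(c_{j})=\int_{0}^{1}\widehat{P}_{l}(\tau)\widehat{P}_{m}(\tau)\,d\tau=\delta_{lm}.
\end{equation*}
Equivalently, the matrix $W=\bigl(\widehat{P}_{l-1}(c_{j})\bigr)_{l,j=1}^{k}$ satisfies $W\,\mathrm{diag}(b_{1},\ldots,b_{k})\,W^{\top}=I$.

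Next I would expand the Lagrange polynomial $L_{j}$ in the shifted Legendre basis, $L_{j}(\tau)=\sum_{l=0}^{k-1}\alpha_{jl}\widehat{P}_{l}(\tau)$; such an expansion exists and is unique since $L_j$ has degree $k-1$. Imposing $L_{j}(c_{i})=\delta_{ij}$ and inverting via the discrete orthogonality gives the explicit coefficients $\alpha_{jl}=b_{j}\widehat{P}_{l}(c_{j})$. Substituting back,
\begin{equation*}
a_{ij}^{\mathrm{RIIA}}=\int_{0}^{c_{i}}L_{j}(\tau)\,d\tau=b_{j}\sum_{l=0}^{k-1}\widehat{P}_{l}(c_{j})\int_{0}^{c_{i}}\widehat{P}_{l}(\tau)\,d\tau=\bar{a}_{ij},
\end{equation*}
and the weights agree by construction. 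This identifies the two tableaux and proves the theorem.

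The main obstacle is the discrete orthogonality step: one has to confirm that Radau-right quadrature's exactness degree $2k-2$ is exactly what is needed to make $\{\widehat{P}_{l}\}_{l=0}^{k-1}$ discretely orthonormal at the $k$ nodes. Once this is in hand, the inversion producing $\alpha_{jl}=b_{j}\widehat{P}_{l}(c_{j})$ and the subsequent identification of the Butcher arrays are essentially formal.
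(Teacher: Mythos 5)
Your proof is correct, but it takes a genuinely different route from the paper's. Both arguments start by invoking Theorem \ref{HBVM thm} with $n=k$ to reduce the claim to identifying the tableau $\bar a_{ij}=b_j\sum_{l=0}^{k-1}\widehat{P}_l(c_j)\int_0^{c_i}\widehat{P}_l(\tau)\,d\tau$ with Radau IIA, and both hinge on the same discrete orthogonality $\sum_{i=1}^{k}b_i\widehat{P}_l(c_i)\widehat{P}_m(c_i)=\delta_{lm}$, which is exactly what the degree of exactness $2k-2$ of the $k$-point Radau-right rule supplies. From there you diverge: you take the collocation characterization of Radau IIA, expand each Lagrange basis polynomial $L_j$ in the shifted Legendre basis with $\alpha_{jl}=b_j\widehat{P}_l(c_j)$ (which, besides your inversion argument, also follows directly from $\alpha_{jl}=\int_0^1 L_j\widehat{P}_l\,d\tau$ because $L_j\widehat{P}_l$ has degree at most $2k-2$ and is integrated exactly), and match $\bar a_{ij}$ with $\int_0^{c_i}L_j(\tau)\,d\tau$ entrywise. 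The paper instead uses the three-term integration recurrences for the shifted Legendre polynomials to factor $\bar A=WX_kQ$, applies the discrete orthogonality to conclude $Q=W^{-1}$, and recognizes $\bar A=WX_kW^{-1}$ as the W-transformation form of Radau IIA given in \cite{Brugnano2015}. Your version is more elementary and self-contained in that it avoids the matrix $X_k$ and the W-transformation entirely, at the price of taking the collocation definition of Radau IIA as the reference point; the paper's version lands exactly on the representation used in \cite{Brugnano2015}. One cosmetic note: your $W$ is the transpose of the paper's, so your identity $W\,\mathrm{diag}(b_1,\ldots,b_k)\,W^{\top}=I$ is the paper's $WQ=I$ in disguise.
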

\begin{proof}
It follows from Theorem \ref{HBVM thm} that when $A\rightarrow 0,$
the EFCM(k,k) defined by \eqref{methods} reduces to \eqref{methods0}
with $n=k$.  According to \cite{hairer-stiff}, the shifted Legendre
polynomials $\{\widehat{P}_j\}_{j=0}^{\infty}$ satisfy the
 following  integration formulae
\begin{equation*}
\begin{aligned}
&\int_{0}^{x}\widehat{P}_0( t)dt=\xi_1\widehat{P}_1( x)+\frac{1}{2}\widehat{P}_0( x),\\
 &\int_{0}^{x}\widehat{P}_m( t)dt=\xi_{m+1}\widehat{P}_{m+1}( x)-\xi_{m}\widehat{P}_{m-1}( x),\ m=1,2,\ldots,k-2,\\
 & \int_{0}^{x}\widehat{P}_{k-1}( t)dt= \beta_{k}\widehat{P}_{k-1}( x)-\xi_{k-1}\widehat{P}_{k-2}( x),\end{aligned}
\end{equation*}
where $$\xi_{m}=\frac{1}{2\sqrt{4m^2-1}},\ \ \
\beta_{k}=\frac{1}{4k-2}.$$ These formulae imply
\begin{equation*}
\begin{aligned}
\bar{A} =&\left(
          \begin{array}{ccc}
            \int_{0}^{c_1}\widehat{P}_0( \tau)d\tau &  \ldots & \int_{0}^{c_1}\widehat{P}_{k-1}( \tau)d\tau \\
            \vdots &   & \vdots \\
           \int_{0}^{c_k}\widehat{P}_0( \tau)d\tau &  \ldots & \int_{0}^{c_k}\widehat{P}_{k-1}( \tau)d\tau \\
          \end{array}
        \right)
        \left(
          \begin{array}{ccc}
            b_1\widehat{P}_0( c_1) &  \ldots &  b_s\widehat{P}_0( c_s)\\
            \vdots &   & \vdots \\
            b_1\widehat{P}_{k-1}( c_1) &  \ldots &  b_s\widehat{P}_{k-1}( c_s)\\
          \end{array}
        \right)\\
        =&W X_k Q
,\end{aligned}
\end{equation*}
where the matrix  $W$ is defined by
$$\omega_{ij}=\widehat{P}_{j-1}( c_i),\ \ \ \ i,j=1,\ldots,k,$$
and the matrices $X_k,\ Q$ are  determined  by
\begin{equation}\label{Xk Q}
\begin{aligned}
& X_k=\left(
                 \begin{array}{ccccc}
                   \frac{1}{2} &-\xi_1 &   &   &   \\
                   \xi_1 &0 & -\xi_2 &   &   \\
                     & \ddots & \ddots & \ddots &   \\
                    &   & \xi_{k-2}& 0 & -\xi_{k-1} \\
                     &   &   & \xi_{k-1} & \beta_k \\
                 \end{array}
               \right),\ \ Q=\left(
          \begin{array}{ccc}
            b_1\widehat{P}_0( c_1) &  \ldots &  b_s\widehat{P}_0( c_s)\\
            \vdots &   & \vdots \\
            b_1\widehat{P}_{k-1}( c_1) &  \ldots &  b_s\widehat{P}_{k-1}( c_s)\\
          \end{array}
        \right).\end{aligned}
\end{equation}

Based on  the fact that  the Radau-right quadrature formula is of
order $2k-1$, we  obtain that polynomials $\widehat{P}_{m}(x)
\widehat{P}_{n}(x)\ (m+n\leq2k-2)$ are integrated exactly by this
quadrature formula, i.e.,
$$\sum\limits_{i=1}^ {k}b_i\widehat{P}_{m}(c_i)
\widehat{P}_{n}(c_i)=\int_{0}^{1}\widehat{P}_{m}(x)
\widehat{P}_{n}(x)dx=\delta_{mn},$$ which  means  $WQ=I.$
 Therefore,
$$\bar{A} =W X_k W^{-1}.$$
 \eqref{Radau datas} now becomes \begin{equation*}
\begin{tabular}
[c]{c|ccc}%
$c_{1}$ &  \\
$\vdots$ & $ $ &  $\bar{A}=W X_k W^{-1}$  \\
$c_{k}$ &  \\\hline & $\raisebox{-1.3ex}[1.0pt]{$\ b_1$}$ &
$\raisebox{-1.3ex}[1.0pt]{$\cdots$}$ &
$\raisebox{-1.3ex}[1.0pt]{$b_k$}$%
\end{tabular}
\end{equation*}
which is exactly the same as the scheme of Radau IIA method
presented in \cite{Brugnano2015} by using the  W-transformation.
\end{proof}

\subsection{Connection between EFCMs and TFCMs}

 A novel type of trigonometric Fourier collocation methods (TFCMs)
for second-order oscillatory differential equations
\begin{equation}
q^{\prime\prime}(t)+Mq(t)=f(q(t)),  \qquad
q(0)=q_0,\ \ q'(0)=q'_0\label{old-prob}%
\end{equation}
has been  developed and researched in \cite{wang-2014}. These
methods can attain arbitrary algebraic order in a very simple way,
which is very important for solving systems of second-order
oscillatory ODEs. This part  is devoted to clarifying the connection
between EFCMs and TFCMs.

We apply the TFCMs  presented in \cite{wang-2014} to
\eqref{old-prob} and denote the numerical solution by $(v_{T},\
u_{T})^\intercal$. According to the analysis in \cite{wang-2014}, it
is known that the numerical solution satisfies the following
differential equation
\begin{equation}\begin{aligned}& \left(
                   \begin{array}{c}
                     v_{T}(\xi h) \\
                      u_{T}(\xi h) \\
                   \end{array}
                 \right)
'=\left(
                                                                           \begin{array}{c}
                                                                             u_{T}(\xi h) \\
                                                                            -Mv_{T}(\xi h)+\sum\limits_{j=0}^ {n-1}\widehat{P}_j(\xi )\sum\limits_{l=1}^ {k}b_l\widehat{P}_j(c_l)f(v_{T}(c_l h))
                                                                           \end{array}
                                                                         \right)
\label{TFCM equation}%
\end{aligned}\end{equation}
with the initial value $$\big( v_{T}(0),u_{T}(0)\big)^\intercal=(
q_0,q'_0)^\intercal.$$ By appending the equation $q'=p$, the system
\eqref{old-prob} can be turned into
\begin{equation}
\left(
  \begin{array}{c}
    q(t) \\
    p (t) \\
  \end{array}
\right)'+\left(
           \begin{array}{cc}
             0 & -I \\
             M & 0 \\
           \end{array}
         \right)\left(
                  \begin{array}{c}
                    q(t)  \\
                    p(t)  \\
                  \end{array}
                \right)=\left(
                          \begin{array}{c}
                            0 \\
                            f(q(t) ) \\
                          \end{array}
                        \right),\ \ \ \left(
                                   \begin{array}{c}
                                      q(0)  \\
                    p(0)  \\
                                   \end{array}
                                 \right)=\left(
                                           \begin{array}{c}
                                             q_0 \\
                                             q'_0 \\
                                           \end{array}
                                         \right)
                        . \label{eq:ham}%
\end{equation}
We  apply  the  EFCM(k,n) defined by \eqref{methods} to the
first-order differential equations \eqref{eq:ham} and denote the
corresponding numerical solution   by $(v_{E},u_{E})^\intercal$.
From the formulation of EFCMs presented in Section
\ref{sec:Computations}, it follows that $(v_{E},u_{E})^\intercal$ is
the solution of the system
\begin{equation}\begin{aligned}& \left(
                   \begin{array}{c}
                     v_{E}(\xi h) \\
                      u_{E}(\xi h) \\
                   \end{array}
                 \right)
'+\left(
           \begin{array}{cc}
             0 & -I \\
            M & 0 \\
           \end{array}
         \right)\left(
                   \begin{array}{c}
                     v_{E}(\xi h) \\
                      u_{E}(\xi h) \\
                   \end{array}
                 \right)=\left(
                          \begin{array}{c}
                            0 \\
                           \sum\limits_{j=0}^ {n-1}\widehat{P}_j(\xi )\sum\limits_{l=1}^ {k}b_l\widehat{P}_j(c_l)f(v_{E}(c_l h))\\
                          \end{array}
                        \right)
\label{EFCM equation}%
\end{aligned}\end{equation}
with the initial value $$\big( v_{E}(0),u_{E}(0)\big)^\intercal=(
q_0,q'_0)^\intercal.$$

It is obvious that the system \eqref{EFCM equation} as well as the
initial condition is exactly the same as  \eqref{TFCM equation}.
Therefore, we obtain the following theorem.
\begin{theorem}\label{TFCM thm}
The EFCM(k,n) defined by \eqref{methods} reduces to a trigonometric
Fourier collocation method given in \cite{wang-2014} when it is
applied to solve the special first-order differential equations
\eqref{eq:ham},  namely, the second-order oscillatory differential
equations \eqref{old-prob}.
\end{theorem}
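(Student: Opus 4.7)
The plan is to exploit the fact that both sides of the claimed equality are, by construction, exact evaluations of solutions of certain auxiliary ODE systems. The heavy lifting has already been set up in the two displays \eqref{TFCM equation} and \eqref{EFCM equation}, so the proof really reduces to matching these two systems and invoking uniqueness of the initial value problem.

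First I would recall from Section \ref{sec:Computations} that the EFCM(k,n) stage values $v_i$ and the step value $v(h)$ were defined precisely so that they coincide with the exact values at $c_ih$ and $h$ of the solution of the intermediate ODE \eqref{quadrature truncating H-s}, obtained by the variation-of-constants formula \eqref{probsystemF3}. Applied to the augmented Hamiltonian form \eqref{eq:ham} of the second-order system \eqref{old-prob}, the linear operator is
\begin{equation*}
A=\begin{pmatrix} 0 & -I \\ M & 0 \end{pmatrix},
\end{equation*}
and the nonlinearity has zero first block and $f(q)$ in the second block. Substituting these data into \eqref{quadrature truncating H-s} directly produces the display \eqref{EFCM equation} with initial datum $(q_0,q_0')^\intercal$. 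This step is essentially bookkeeping; the matrix functions $\varphi_k(-c_iV)$ do not need to be unraveled explicitly, since we are only using the defining property that EFCM integrates \eqref{quadrature truncating H-s} exactly.

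Second I would recall (from \cite{wang-2014}) the analogous fact for TFCMs: the TFCM numerical solution $(v_T,u_T)^\intercal$ of \eqref{old-prob} coincides with the exact solution of the auxiliary system \eqref{TFCM equation}, with the same initial data $(q_0,q_0')^\intercal$. Then I would carry out the direct comparison between \eqref{EFCM equation} and \eqref{TFCM equation}: multiplying out the block matrix product in \eqref{EFCM equation} gives $v_E'(\xi h)=u_E(\xi h)$ and $u_E'(\xi h)=-M v_E(\xi h)+\sum_{j=0}^{n-1}\widehat{P}_j(\xi)\sum_{l=1}^k b_l \widehat{P}_j(c_l) f(v_E(c_l h))$, which is literally the componentwise form of \eqref{TFCM equation}.

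Once both numerical schemes are identified with exact solutions of the same linear inhomogeneous ODE sharing the same initial condition, uniqueness of the initial value problem forces $(v_E,u_E)=(v_T,u_T)$ pointwise on $[0,h]$, and in particular at the collocation nodes $c_lh$ and at $h$. Since both methods are fully determined by their values at these points, the EFCM(k,n) scheme and the TFCM scheme coincide, which is the claim. There is no real obstacle here beyond careful checking that the quadrature weights, the truncation index $n$, and the shifted Legendre basis $\{\widehat P_j\}$ are used in exactly the same way on both sides; the only mild subtlety is to observe that the first block of the right-hand side in \eqref{EFCM equation} is genuinely zero, so no spurious $\widehat P_j$ expansion appears in the equation for $v_E$ and the identification with \eqref{TFCM equation} is exact rather than merely formal.
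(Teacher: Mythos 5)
Your proposal is correct and follows essentially the same route as the paper: the paper likewise identifies the TFCM numerical solution with the exact solution of \eqref{TFCM equation}, identifies the EFCM(k,n) solution of the augmented system \eqref{eq:ham} with the exact solution of \eqref{EFCM equation}, and concludes from the fact that the two auxiliary initial value problems are identical. Your added remarks on uniqueness of the IVP and on the vanishing first block of the right-hand side only make explicit what the paper leaves implicit.
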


\begin{remark}
 It follows from Theorems \ref{HBVM thm}--\ref{TFCM thm} that   EFCMs are an effective extension of
 HBVMs,  Gauss methods, Radau IIA methods and TFCMs. Consequently, EFCMs can
be regarded as a generalization of these existing methods in the
literature.
\end{remark}

\section{Properties of EFCMs} \label{sec:Analysis of the methods}
In this section, we turn to  analysing  the properties of EFCMs,
including their accuracy in preserving the Hamiltonian energy and
the quadratic invariants once the underlying problem is a
Hamiltonian system, their algebraic order {and convergence condition
of the fixed-point iteration.}

 The following result is needed in our analysis, and its proof can be
found in \cite{Brugnano2012}.
\begin{lemma}
\label{Pj lem}Let $f:[0,h]\rightarrow \mathbb{R}^{d}$ have $j$
continuous derivatives in the interval $[0,h]$. Then, we obtain
$\int_{0}^1\widehat{P}_j(\tau)f(\tau h)d\tau=\mathcal{O}(h^{j}).$
\end{lemma}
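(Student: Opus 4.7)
The plan is to exploit the orthogonality of the shifted Legendre polynomial $\widehat{P}_j$ to all polynomials of degree strictly less than $j$ on $[0,1]$, and to peel off exactly that much smoothness from $f(\tau h)$. The cleanest route is the Rodrigues formula: on $[0,1]$ one has
\begin{equation*}
\widehat{P}_j(\tau)=\frac{\sqrt{2j+1}}{j!}\frac{d^j}{d\tau^j}\bigl[\tau^j(\tau-1)^j\bigr],
\end{equation*}
so the factor $\tau^j(\tau-1)^j$ has zeros of order $j$ at both endpoints. I would substitute this into the integral and then integrate by parts $j$ times, moving all $j$ derivatives from $\tau^j(\tau-1)^j$ onto $f(\tau h)$. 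Every boundary contribution vanishes because each intermediate antiderivative still carries at least one factor of $\tau$ or $\tau-1$.

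After the $j$-fold integration by parts one is left with
\begin{equation*}
\int_{0}^1\widehat{P}_j(\tau)f(\tau h)\,d\tau=\frac{(-1)^j\sqrt{2j+1}}{j!}\int_{0}^1 \tau^j(\tau-1)^j\,\frac{d^j}{d\tau^j}f(\tau h)\,d\tau,
\end{equation*}
and the chain rule turns $\tfrac{d^j}{d\tau^j}f(\tau h)$ into $h^j f^{(j)}(\tau h)$. Since $f^{(j)}$ is continuous (hence bounded) on $[0,h]$ by hypothesis and $\int_0^1|\tau^j(\tau-1)^j|\,d\tau$ is a fixed finite constant depending only on $j$, the entire expression is bounded by a constant multiple of $h^j$, which is the claim.

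An equivalent route, should the Rodrigues formula be inconvenient, is to Taylor-expand $f(\tau h)$ at $\tau=0$ up to degree $j-1$ with integral remainder of size $\mathcal{O}(h^j)$; the polynomial part of the expansion is killed by orthogonality $\int_0^1\widehat{P}_j(\tau)\tau^\ell\,d\tau=0$ for $\ell<j$, and only the remainder survives. The main obstacle in either approach is nothing deep, only the bookkeeping needed to verify that the boundary terms in the integration by parts truly vanish at each of the $j$ stages, which is handled uniformly by the observation that each antiderivative of $\tau^j(\tau-1)^j$ taken up to order $j-1$ retains a factor vanishing at both $0$ and $1$.
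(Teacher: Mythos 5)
Your argument is correct. Note first that the paper itself does not prove this lemma --- it states that ``its proof can be found in \cite{Brugnano2012}'' --- so there is no in-paper proof to compare against; the relevant comparison is with that reference. Your primary route (Rodrigues formula $\widehat{P}_j(\tau)=\frac{\sqrt{2j+1}}{j!}\frac{d^j}{d\tau^j}\bigl[\tau^j(\tau-1)^j\bigr]$, which is indeed the correct orthonormal normalisation consistent with the paper's explicit formula \eqref{Pj}, followed by $j$-fold integration by parts) is sound: the order-$j$ zeros of $\tau^j(\tau-1)^j$ at both endpoints kill every boundary term, and the chain rule produces the factor $h^j$ multiplying the bounded quantity $\int_0^1|\tau^j(\tau-1)^j|\,\|f^{(j)}(\tau h)\|\,d\tau$. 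Your alternative route --- Taylor-expand $f(\tau h)$ to degree $j-1$ with an $\mathcal{O}(h^j)$ integral remainder and invoke $\int_0^1\widehat{P}_j(\tau)\tau^{\ell}\,d\tau=0$ for $\ell<j$ --- is essentially the proof given in the cited reference, so you have in effect supplied both the reference's argument and an independent one. The only point worth flagging, which is a defect of the lemma's statement rather than of your proof, is that the $\mathcal{O}(h^j)$ conclusion implicitly requires $\|f^{(j)}\|$ to be bounded uniformly as $h\to0$ (e.g.\ $f$ defined and $C^j$ on a fixed interval $[0,h_0]$); in the paper's application to $\kappa_j(h,u)$ this holds because $g$ and $u$ are smooth on a fixed interval, so nothing further is needed.
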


As a consequence of this lemma, we have
$$\kappa_j(h,v)=\int_{0}^{1}\widehat{P}_j(\tau)g(\tau h,v(\tau
h))d\tau=\mathcal{O}(h^{j}).$$

\subsection{The Hamiltonian case} \label{subsec:Hamiltonian}
Consider  the  following initial-value Hamiltonian
 systems
\begin{equation}
\begin{aligned}
u'(t) =J\nabla H(u(t)),\ \ \ u(0)=u_0 \end{aligned}
\label{H-s}%
\end{equation}
with the Hamiltonian function $H(u)$ and   the  skew-symmetric
matrix $J$. Under the condition that
\begin{equation}J\nabla H(u(t))=g(t,u(t))-Au(t),
\label{condition}%
\end{equation}
the  Hamiltonian
 systems  \eqref{H-s} are identical to the first-order initial value problems of the form
 \eqref{prob}.    The following is an important example
(see, e.g. \cite{Lubich2006,wang-2014}):
\[
H(p,q)=\frac{1}{2}p^{\intercal}p+\dfrac{1}{2}q^{\intercal}Mq+U(q),
\]
where  $M$ is a  symmetric and  positive semi-definite matrix, and
$U$ is a smooth potential with moderately bounded derivatives. This
kind of Hamiltonian system  frequently arises in applied
mathematics, molecular biology, electronics, chemistry, astronomy,
classical mechanics and quantum physics, and it can be expressed by
the following differential equation:
\begin{equation*}
\left(
  \begin{array}{c}
    q \\
    p \\
  \end{array}
\right)'+\left(
           \begin{array}{cc}
             0 & -I \\
             M & 0 \\
           \end{array}
         \right)\left(
                  \begin{array}{c}
                    q \\
                    p \\
                  \end{array}
                \right)=\left(
                          \begin{array}{c}
                            0 \\
                            -\nabla U(q) \\
                          \end{array}
                        \right)
,
\end{equation*}
which is exactly a first-order differential system of the form
\eqref{prob}.

In what follows, we are concerned with the  order of preserving the
Hamiltonian energy when EFCMs are applied to solve the Hamiltonian
 system \eqref{H-s}--\eqref{condition}.

\begin{theorem}\label{energy thm}
Let the quadrature formula in \eqref{methods} {be exact for
polynomials of degree up to $m-1$.} Then, for the  EFCM(k,n) when
applied to the Hamiltonian
 system \eqref{H-s}--\eqref{condition},  we have
$$H(v(h))=H(u_0)+\mathcal{O}(h^{r+1})\ \ \
\textmd{with}\ \  r=\min \{m,2n\}.$$
\end{theorem}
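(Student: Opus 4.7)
The plan is to treat the numerical solution $v$ on $[0,h]$ as the continuous interpolant implicitly defined by the linear inhomogeneous ODE \eqref{quadrature truncating H-s}, so that the update produced by EFCM$(k,n)$ is simply $v(h)$. One may then write $H(v(h)) - H(u_0) = \int_{0}^{h} \nabla H(v(t))^{\intercal} v'(t)\,dt$ and reduce the Hamiltonian defect to a quadrature-plus-truncation residual between the right-hand side of \eqref{quadrature truncating H-s} and the exact vector field $-Av(t) + g(t, v(t))$.

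Substituting $v'(t) = -Av(t) + \sum_{j=0}^{n-1}\widehat{P}_j(\xi)\gamma_j$, where $\xi = t/h$ and $\gamma_j := \sum_{l=1}^{k} b_l \widehat{P}_j(c_l) g(c_l h, v(c_l h))$, I would add and subtract $\nabla H(v(t))^{\intercal} g(t, v(t))$ inside the integrand. The Hamiltonian structure \eqref{condition} gives $J\nabla H(v) = g(t,v) - Av$, and skew-symmetry of $J$ forces $\nabla H(v)^{\intercal}\bigl(g(t,v) - Av\bigr) = 0$. Inserting the full Fourier expansion $g(\xi h, v(\xi h)) = \sum_{j\ge 0} \widehat{P}_j(\xi)\kappa_j(h,v)$ from \eqref{fq}, the integrand collapses to $\nabla H(v(t))^{\intercal}$ times the residual
$$\sum_{j=0}^{n-1}\widehat{P}_j(\xi)\bigl(\gamma_j - \kappa_j(h,v)\bigr) \;-\; \sum_{j=n}^{\infty}\widehat{P}_j(\xi)\kappa_j(h,v),$$
which splits the Hamiltonian error into a pure quadrature-error contribution and a Fourier-truncation contribution.

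I would then bound each sum separately after integration in $\xi$. Lemma \ref{Pj lem}, applied componentwise to the smooth map $\xi \mapsto \nabla H(v(\xi h))$, gives $\int_{0}^{1}\widehat{P}_j(\xi)\nabla H(v(\xi h))\,d\xi = \mathcal{O}(h^{j})$. For the first sum, the quadrature-error estimate derived in Section \ref{subsec:Discretization} yields $\gamma_j - \kappa_j(h,v) = \mathcal{O}(h^{m-j})$ (valid since $j \le n-1 \le k-1 \le m-1$), so its contribution is $h\sum_{j=0}^{n-1}\mathcal{O}(h^{j})\mathcal{O}(h^{m-j}) = \mathcal{O}(h^{m+1})$. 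For the tail, Lemma \ref{Pj lem} also gives $\kappa_j(h,v) = \mathcal{O}(h^{j})$, so the $j$-th tail term is $\mathcal{O}(h^{2j})$ and the leading contribution is $\mathcal{O}(h^{2n+1})$. Adding the two pieces yields $H(v(h)) - H(u_0) = \mathcal{O}(h^{m+1}) + \mathcal{O}(h^{2n+1}) = \mathcal{O}(h^{r+1})$ with $r = \min\{m, 2n\}$.

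The main obstacle is the bookkeeping that pairs the $h^{j}$ gain from Lemma \ref{Pj lem} against the $h^{m-j}$ quadrature error so that both contributions close up uniformly in $j$; a secondary subtlety is justifying that $v(\cdot)$ is smooth enough in $\xi$ for Lemma \ref{Pj lem} to apply to $\nabla H \circ v$, which is immediate because the forcing in \eqref{quadrature truncating H-s} is polynomial of degree $n-1$ in $\xi$ and $g$ is analytic.
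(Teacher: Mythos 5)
Your proposal is correct and follows essentially the same route as the paper: the same identity $H(v(h))-H(u_0)=h\int_0^1\nabla H(v(\xi h))^{\intercal}v'(\xi h)\,d\xi$, the same cancellation of $\nabla H(v)^{\intercal}\bigl(g-Av\bigr)$ via skew-symmetry of $J$, the same splitting of the residual into the quadrature defects $\Delta_j=\mathcal{O}(h^{m-j})$ for $j\le n-1$ and the Fourier tail $\kappa_j=\mathcal{O}(h^{j})$ for $j\ge n$, and the same pairing with Lemma \ref{Pj lem} to obtain $\mathcal{O}(h^{m+1})+\mathcal{O}(h^{2n+1})$. If anything, your direct use of $\nabla H(v)^{\intercal}J\nabla H(v)=0$ is slightly cleaner than the paper's intermediate rewriting of $\nabla H^{\intercal}$, and your closing remark on the smoothness of $v$ addresses a point the paper leaves implicit.
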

 \begin{proof}
 It follows from \eqref{quadrature truncating
 H-s} and \eqref{condition} that
 \begin{equation*}
\begin{aligned} &H(v(h))-H(u_0)
=h \int_{0}^{1}
\nabla H(v(\xi  h))^{\intercal}v'(\xi  h)d\xi \\
=&h \int_{0}^{1} \nabla H(v(\xi
h))^{\intercal}\Big(\sum\limits_{j=0}^ {n-1}\widehat{P}_j(\xi
)\sum\limits_{l=1}^
{k}b_l\widehat{P}_j(c_l)g(c_l h,v(c_l h))-Av(\xi h)\Big)d\xi \\
=&h \int_{0}^{1}  \Big(g(v(\xi  h))-Av(\xi
h)\Big)^{\intercal}J\Big(\sum\limits_{j=0}^ {n-1}\widehat{P}_j(\xi
)\sum\limits_{l=1}^
{k}b_l\widehat{P}_j(c_l)g(c_l h,v(c_l h))-Av(\xi h)\Big)d\xi \\
=&h \int_{0}^{1}  \Big(g(v(\xi  h))-Av(\xi
h)\Big)^{\intercal}J\Big(g(v(\xi  h))-Av(\xi h)\\
&+\sum\limits_{j=0}^ {n-1}\widehat{P}_j(\xi )\sum\limits_{l=1}^
{k}b_l\widehat{P}_j(c_l)g(c_l h,v(c_l h))-g(v(\xi  h))\Big)d\xi \\
=&h \int_{0}^{1}  \Big(g(v(\xi  h))-Av(\xi
h)\Big)^{\intercal}J\Big(g(v(\xi  h))-Av(\xi h)\Big)d\xi\\
&+h \int_{0}^{1}   \Big(g(v(\xi  h))-Av(\xi
h)\Big)^{\intercal}J\Big(\sum\limits_{j=0}^ {n-1}\widehat{P}_j(\xi
)\sum\limits_{l=1}^ {k}b_l\widehat{P}_j(c_l)g(c_l h,v(c_l
h))-g(v(\xi  h))\Big)d\xi.
\end{aligned}
\end{equation*}
Since J is skew-symmetric, we have $$\int_{0}^{1}  \Big(g(v(\xi
h))-Av(\xi h)\Big)^{\intercal}J\Big(g(v(\xi  h))-Av(\xi
h)\Big)d\xi=0.$$ Thus
\begin{equation*}
\begin{aligned} &H(v(h))-H(u_0)\\
=&h \int_{0}^{1}    \nabla H(v(\xi  h))^T\Big(\sum\limits_{j=0}^
{n-1}\widehat{P}_j(\xi )\sum\limits_{l=1}^
{k}b_l\widehat{P}_j(c_l)g(c_l h,v(c_l
h))-g(v(\xi  h))\Big)d\xi\\
=&h \int_{0}^{1}    \nabla H(v(\xi  h))^T\Big(\sum\limits_{j=0}^
{n-1}\widehat{P}_j(\xi )\sum\limits_{l=1}^
{k}b_l\widehat{P}_j(c_l)g(c_l h,v(c_l h))-\sum\limits_{j=0}^
{+\infty}\widehat{P}_j(\xi
)\kappa_j(h,v)\Big)d\xi\\
=&-h \int_{0}^{1}    \nabla H(v(\xi  h))^T\Big(\sum\limits_{j=0}^
{n-1}\widehat{P}_j(\xi )\Delta_j(h,v)+\sum\limits_{j=n}^
{+\infty}\widehat{P}_j(\xi )\kappa_j(h,v)\Big)d\xi \\
=&-h \sum\limits_{j=0}^ {n-1}\int_{0}^{1}    \nabla H(v(\xi
h))^T\widehat{P}_j(\xi )d\xi\Delta_j(h,v)-h\sum\limits_{j=n}^
{+\infty}\int_{0}^{1}  \nabla H(v(\xi h))^T\widehat{P}_j(\xi
)d\xi\kappa_j(h,v).
\end{aligned}
\end{equation*}
From Lemma \ref{Pj lem},  we have
\begin{equation*}
\begin{aligned} &H(v(h))-H(u_0)
=-h  \sum\limits_{j=0}^ {n-1}\mathcal{O}(h^{j}\times
h^{m-j})-h\sum\limits_{j=n}^{\infty} \mathcal{O}(h^{j}\times h^{j})
=\mathcal{O}(h^{m+1})+\mathcal{O}(h^{2n+1}),
\end{aligned}
\end{equation*}
which shows the  result of the theorem. 
\end{proof}

\subsection{The quadratic invariants} \label{subsec:quadratic}
Quadratic invariants appear often in applications and we thus  pay
attention to the quadratic invariants of \eqref{prob} in this
subsection. Consider the following quadratic function
$$Q(u)=u^{\intercal}Cu$$ with  a symmetric square matrix $C$. It is
an invariant
 of \eqref{prob}  provided $u^{\intercal}C(g(t,u)-Au)=0$   holds.

\begin{theorem}\label{invariant thm}
Let the quadrature formula in \eqref{methods} {be exact for
polynomials of degree up to $m-1$}, then
$$Q(v(h))=Q(u_0)+\mathcal{O}(h^{r+1}) \ \
\textmd{with}\ \    r=\min \{m,2n\}.$$
\end{theorem}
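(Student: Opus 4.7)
The plan is to mirror the argument used for Theorem \ref{energy thm}, replacing the role of the skew-symmetry of $J$ with the direct invariance condition $u^{\intercal}C(g(t,u)-Au)=0$. Starting from
\[
Q(v(h))-Q(u_0)=2h\int_{0}^{1}v(\xi h)^{\intercal}C\,v'(\xi h)\,d\xi ,
\]
I would substitute the ODE \eqref{quadrature truncating H-s} satisfied by the collocation polynomial $v$, and then add and subtract $g(\xi h,v(\xi h))$ inside the integrand so that the contribution splits as
\[
2h\int_{0}^{1}v^{\intercal}C\bigl(g(\xi h,v(\xi h))-Av(\xi h)\bigr)d\xi
+2h\int_{0}^{1}v^{\intercal}C\Bigl(\!\sum_{j=0}^{n-1}\widehat{P}_j(\xi)\!\sum_{l=1}^{k}b_l\widehat{P}_j(c_l)g(c_lh,v(c_lh))-g(\xi h,v(\xi h))\!\Bigr)d\xi .
\]

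The first integral vanishes pointwise by the quadratic-invariant hypothesis $u^{\intercal}C(g-Au)=0$, evaluated at $u=v(\xi h)$. This is the key structural simplification and is cleaner than the corresponding step in the Hamiltonian proof, where skew-symmetry of $J$ was needed. All the remaining work therefore concerns the second integral, which encodes exactly the same two error sources already handled in Theorem \ref{energy thm}: the quadrature error in approximating $\kappa_j(h,v)$, and the truncation of the Fourier expansion of $g(\xi h,v(\xi h))$ after $n$ terms.

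Next I would rewrite the second integrand by inserting the full Fourier expansion \eqref{fq} for $g(\xi h,v(\xi h))$, yielding
\[
\sum_{j=0}^{n-1}\widehat{P}_j(\xi)\Bigl(\!\sum_{l=1}^{k}b_l\widehat{P}_j(c_l)g(c_lh,v(c_lh))-\kappa_j(h,v)\!\Bigr)-\sum_{j=n}^{\infty}\widehat{P}_j(\xi)\kappa_j(h,v)
=-\sum_{j=0}^{n-1}\widehat{P}_j(\xi)\Delta_j(h,v)-\sum_{j=n}^{\infty}\widehat{P}_j(\xi)\kappa_j(h,v),
\]
with the same $\Delta_j$ analysed in Section \ref{subsec:Discretization}. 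The contribution then becomes
\[
-2h\sum_{j=0}^{n-1}\!\Bigl(\int_{0}^{1}\!v(\xi h)^{\intercal}C\widehat{P}_j(\xi)d\xi\Bigr)\Delta_j(h,v)
-2h\sum_{j=n}^{\infty}\!\Bigl(\int_{0}^{1}\!v(\xi h)^{\intercal}C\widehat{P}_j(\xi)d\xi\Bigr)\kappa_j(h,v).
\]

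Finally, I would close the estimate by applying Lemma \ref{Pj lem} to each weighted inner product, giving $\int_{0}^{1}v(\xi h)^{\intercal}C\widehat{P}_j(\xi)d\xi=\mathcal{O}(h^{j})$, and combining with $\Delta_j(h,v)=\mathcal{O}(h^{m-j})$ from the quadrature analysis and $\kappa_j(h,v)=\mathcal{O}(h^{j})$ from the corollary of Lemma \ref{Pj lem}. The finite sum then contributes $h\cdot\mathcal{O}(h^{j}\cdot h^{m-j})=\mathcal{O}(h^{m+1})$ and the tail contributes $h\cdot\mathcal{O}(h^{2j})$ for $j\ge n$, i.e.\ $\mathcal{O}(h^{2n+1})$, so that $Q(v(h))-Q(u_0)=\mathcal{O}(h^{m+1})+\mathcal{O}(h^{2n+1})=\mathcal{O}(h^{r+1})$ with $r=\min\{m,2n\}$. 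The only place that requires any genuine care is verifying that the pointwise identity $v^{\intercal}C(g-Av)=0$ really holds along the collocation polynomial $v(\xi h)$ (not merely along true trajectories); since the invariant condition is an algebraic identity in $u$, it transfers without change, so I do not anticipate a real obstacle beyond that bookkeeping.
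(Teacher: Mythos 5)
Your proposal is correct and follows essentially the same route as the paper's own proof: substitute the collocation ODE, cancel the $v^{\intercal}C(g-Av)$ term pointwise via the invariance identity, split the remainder into the quadrature errors $\Delta_j$ and the Fourier tail $\kappa_j$, and estimate with Lemma \ref{Pj lem}. Your closing remark is also resolved exactly as you anticipate, since the invariance condition is an algebraic identity in $u$ and therefore holds along the collocation polynomial.
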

 \begin{proof}
It follows from  the definition of quadratic function $Q$ that
\begin{equation*}
\begin{aligned} &Q(v(h))-Q(u_0)\\
=&\int_{0}^{1}
d Q(v(\xi  h))=\int_{0}^{1}\frac{ d Q(v(\xi  h))}{d\xi}d\xi=2h\int_{0}^{1}v^{\intercal}(\xi  h)Cv'(\xi  h)d\xi\\
=&2h\int_{0}^{1}v^{\intercal}(\xi  h)C\Big(\sum\limits_{j=0}^
{n-1}\widehat{P}_j(\xi )\sum\limits_{l=1}^
{k}b_l\widehat{P}_j(c_l)g(c_l h,v(c_l h))-Av(\xi h)\Big)d\xi\\
=&2h\int_{0}^{1}v^{\intercal}(\xi  h)C\Big(g(\xi h,v(\xi h))-Av(\xi
h)\\&+\sum\limits_{j=0}^ {n-1}\widehat{P}_j(\xi )\sum\limits_{l=1}^
{k}b_l\widehat{P}_j(c_l)g(c_l h,v(c_l h))-g(\xi h,v(\xi
h))\Big)d\xi.
\end{aligned}
\end{equation*}
Since   $u^{\intercal}C(g(t,u)-Au)=0$, we obtain
\begin{equation*}
\begin{aligned} &Q(v(h))-Q(u_0)\\=&
2h\int_{0}^{1}v^{\intercal}(\xi  h)C\Big(\sum\limits_{j=0}^
{n-1}\widehat{P}_j(\xi )\sum\limits_{l=1}^
{k}b_l\widehat{P}_j(c_l)g(c_l h,v(c_l h))-g(\xi h,v(\xi h))\Big)d\xi\\
=&-2h\int_{0}^{1}v^{\intercal}(\xi  h)C\Big(\sum\limits_{j=0}^
{n-1}\widehat{P}_j(\xi )\Delta_j(h,v)+\sum\limits_{j=n}^
{+\infty}\widehat{P}_j(\xi )\kappa_j(h,v)\Big)d\xi\\
=&-2h \sum\limits_{j=0}^ {n-1}\int_{0}^{1}   v^{\intercal}(\xi
h)\widehat{P}_j(\xi )d\xi C\Delta_j(h,v)-2h\sum\limits_{j=n}^
{+\infty}\int_{0}^{1}  v^{\intercal}(\xi  h)\widehat{P}_j(\xi )d\xi C\kappa_j(h,v) \\
=&-2h  \sum\limits_{j=0}^ {n-1}\mathcal{O}(h^{j}\times
h^{m-j})-2h\sum\limits_{j=n}^{\infty} \mathcal{O}(h^{j}\times h^{j})
=\mathcal{O}(h^{m+1})+\mathcal{O}(h^{2n+1}),
\end{aligned}
\end{equation*}
which  proves the theorem.
\end{proof}

\subsection{Algebraic order} \label{subsec:order}
As the importance of different qualitative features, a discussion of
the qualitative theory of the underlying ODEs is given. Therefore,
in this subsection, we  analyse the algebraic order of  EFCMs in
preserving the accuracy of  the solution $u(t)$.

To express the dependence of the solutions of
$$u'(t)=g(t,u(t))-Au(t)$$ on the initial values, we  denote by
$u(\cdot,\tilde{t}, \tilde{u})$ the solution satisfying the initial
condition $u(\tilde{t},\tilde{t}, \tilde{u})=\tilde{u}$ for any
given $\tilde{t}\in[0,h]$ and set
\begin{equation}
\Phi(s,\tilde{t}, \tilde{u})=\frac{\partial u(s,\tilde{t},
\tilde{u})}{\partial \tilde{u}}.
\label{Phi}%
\end{equation}
Recalling the elementary theory of ordinary differential equations,
we have the following standard result   (see, e.g.  \cite{hale1980})
\begin{equation}
\frac{\partial u(s,\tilde{t}, \tilde{u})}{\partial
\tilde{t}}=-\Phi(s,\tilde{t}, \tilde{u})(g(\tilde{t},\tilde{
u})-A\tilde{ u}).
\label{standard result}%
\end{equation}

The following theorem states the result on the  algebraic order of
the novel EFCMs.
\begin{theorem}\label{order thm}
Let the quadrature formula in \eqref{methods} {be exact for
polynomials of degree up to $m-1$}. Then we have
$$ u(h)-v(h)=\mathcal{O}(h^{r+1})\ \ \ \textmd{with}\ \
  r=\min \{m,2n\},$$ for the  EFCM(k,n) defined
by \eqref{methods}.
\end{theorem}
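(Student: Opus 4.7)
My plan is to compare the exact flow $u(h)$ with the numerical value $v(h)$ by a standard variation-of-constants / Alekseev--Gr\"obner style telescoping, then expand the resulting defect in the shifted Legendre basis and apply the two error bounds already in hand, namely $\Delta_j(h,v)=\mathcal{O}(h^{m-j})$ (derived in Section~\ref{subsec:Discretization}) and $\kappa_j(h,v)=\mathcal{O}(h^j)$ (Lemma~\ref{Pj lem}).

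First I would introduce the auxiliary function $\psi(t):=u(h,t,v(t))$, which interpolates between $\psi(0)=u(h,0,u_0)=u(h)$ and $\psi(h)=u(h,h,v(h))=v(h)$, so that
\begin{equation*}
v(h)-u(h)=\int_{0}^{h}\psi'(t)\,dt.
\end{equation*}
Differentiating $\psi$ by the chain rule and invoking the standard result \eqref{standard result} gives
\begin{equation*}
\psi'(t)=\Phi(h,t,v(t))\bigl[v'(t)+Av(t)-g(t,v(t))\bigr].
\end{equation*}
Next, I would substitute the differential equation \eqref{quadrature truncating H-s} satisfied by $v$ to replace $v'(t)+Av(t)$, and then insert the full Legendre--Fourier expansion $g(t,v(t))=\sum_{j=0}^{\infty}\widehat{P}_j(t/h)\kappa_j(h,v)$. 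The defect in brackets then decomposes exactly as in the proofs of Theorems~\ref{energy thm} and \ref{invariant thm}:
\begin{equation*}
v'(t)+Av(t)-g(t,v(t))=-\sum_{j=0}^{n-1}\widehat{P}_j(t/h)\Delta_j(h,v)-\sum_{j=n}^{\infty}\widehat{P}_j(t/h)\kappa_j(h,v).
\end{equation*}

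After the change of variable $\xi=t/h$, the two pieces of the error become
\begin{equation*}
u(h)-v(h)=h\sum_{j=0}^{n-1}\Bigl(\int_{0}^{1}\Phi(h,\xi h,v(\xi h))\widehat{P}_j(\xi)\,d\xi\Bigr)\Delta_j(h,v)+h\sum_{j=n}^{\infty}\Bigl(\int_{0}^{1}\Phi(h,\xi h,v(\xi h))\widehat{P}_j(\xi)\,d\xi\Bigr)\kappa_j(h,v).
\end{equation*}
Applying Lemma~\ref{Pj lem} to the inner integrals (with $\Phi$ viewed as the smooth integrand) produces an $\mathcal{O}(h^j)$ factor in each term, so the first sum is $h\sum_{j=0}^{n-1}\mathcal{O}(h^j)\mathcal{O}(h^{m-j})=\mathcal{O}(h^{m+1})$ and the tail is $h\sum_{j=n}^{\infty}\mathcal{O}(h^j)\mathcal{O}(h^j)=\mathcal{O}(h^{2n+1})$. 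Combining yields $u(h)-v(h)=\mathcal{O}(h^{r+1})$ with $r=\min\{m,2n\}$, which is exactly the claim.

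The only delicate point, and the step I would spend care on, is legitimizing the use of Lemma~\ref{Pj lem} and the tail bound on $\sum_{j\ge n}\kappa_j$: one needs that the sensitivity $\Phi(h,\xi h,v(\xi h))$ has enough bounded derivatives in $\xi$, uniformly in $h$, and that the Legendre coefficients of the analytic integrand $g(\cdot,v(\cdot))$ decay fast enough to make the infinite tail converge. Both follow from the analyticity hypothesis on $g$ and the semigroup bound \eqref{condition A}, which together guarantee smoothness of the flow map $(\tilde t,\tilde u)\mapsto u(h,\tilde t,\tilde u)$ and hence of $\Phi$; this mirrors the handling of analogous tail terms in Theorems~\ref{energy thm} and \ref{invariant thm}, so no new technology is required beyond what the earlier subsections already provide.
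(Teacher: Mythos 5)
Your proposal is correct and follows essentially the same route as the paper's own proof: the auxiliary function $\psi(t)=u(h,t,v(t))$, the identity \eqref{standard result}, the decomposition of the defect into the $\Delta_j$ and tail $\kappa_j$ parts, and the application of Lemma \ref{Pj lem} all match the argument given in Subsection \ref{subsec:order}. Your closing remark about the smoothness of $\Phi$ and the decay of the Legendre coefficients is a fair observation of a point the paper itself passes over silently, but it introduces no new method.
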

\begin{proof}It follows from Lemma \ref{Pj lem}, \eqref{Phi} and \eqref{standard result}
that
\begin{equation*}
\begin{aligned} & u(h)-v(h)
= u(h,0,  u_0)- u\big(h,h, v(h)\big)=- \int_{0}^{h}
\frac{d u\big(h,\tau, v(\tau)\big)}{d\tau}d\tau\\
=& - \int_{0}^{h}\Big[ \frac{\partial u\big(h,\tau,
v(\tau)\big)}{\partial \tilde{t}}
+\frac{\partial u\big(h,\tau, v(\tau)\big)}{\partial \tilde{ u}}v'(\tau)\Big]d\tau\\
=& h \int_{0}^{1}\Phi\big(h,\xi h, v(\xi h)\big)\Big[g\big(\xi h,v(\xi h)\big)-Av(\xi h)-v'(\xi h)\Big]d\xi \\
=& h \int_{0}^{1}\Phi\big(h,\xi h, v(\xi h)\big)
\Big[\sum\limits_{j=0}^ {+\infty}\widehat{P}_j(\xi
)\kappa_j(h,v)-Av(\xi h)\\
&-\sum\limits_{j=0}^ {n-1}\widehat{P}_j(\xi )\sum\limits_{l=1}^
{k}b_l\widehat{P}_j(c_l)g(c_l h,v(c_l h))+Av(\xi h)\Big]d\xi\\
 =& h \int_{0}^{1}\Phi\big(h,\xi h, v(\xi h)\big)
\Big[\sum\limits_{j=0}^ {+\infty}\widehat{P}_j(\xi
)\kappa_j(h,v)-\sum\limits_{j=0}^ {n-1}\widehat{P}_j(\xi
)\sum\limits_{l=1}^
{k}b_l\widehat{P}_j(c_l)g(c_l h,v(c_l h))\Big]d\xi\\
 =& h \int_{0}^{1}\Phi\big(h,\xi h, v(\xi h)\big)
\Big[\sum\limits_{j=n}^ {+\infty}\widehat{P}_j(\xi
)\kappa_j(h,v)+\sum\limits_{j=0}^ {n-1}\widehat{P}_j(\xi
)\Delta_j(h,v)\Big]d\xi\\
 =& h \sum\limits_{j=n}^ {+\infty}\int_{0}^{1}\Phi\big(h,\xi h, v(\xi h)\big)
 \widehat{P}_j(\xi )d\xi\kappa_j(h,v)+h\sum\limits_{j=0}^
{n-1}\int_{0}^{1}\Phi\big(h,\xi h, v(\xi h)\big)\widehat{P}_j(\xi
) d\xi\Delta_j(h,v)\\
=&h \Big(\sum\limits_{j=n}^{\infty} \mathcal{O}(h^{j} \times
h^{j})+\sum\limits_{j=0}^ {n-1}\mathcal{O}(h^{j}\times
h^{m-j})\Big)=\mathcal{O}(h^{2n+1})+\mathcal{O}(h^{m+1})
\\=& \mathcal{O}(h^{\min \{m,2n\}+1}).
\end{aligned}
\end{equation*}
The proof is complete. 
\end{proof}

\begin{remark}
 This result means that  choosing a suitable   quadrature formula as well as   a suitable value of $n$ in
\eqref{methods} can yield an  EFCM   of arbitrarily high order. This
manipulation is very simple and convenient, and it opens up  a new
possibility to construct higher--order EFCMs in a simple and routine
manner.
\end{remark}

\begin{remark}It is well known that rth-order numerical methods can preserve the
Hamiltonian energy or the quadratic invariant with at least rth
degree of accuracy, but unfortunately it follows from the analysis
of Subsections \ref{subsec:Hamiltonian} and \ref{subsec:quadratic}
that our methods preserve the Hamiltonian energy and the quadratic
invariant with only rth degree of accuracy.
\end{remark}

\subsection{Convergence condition of the fixed-point
iteration} \label{subsec:Convergence} It is worthy noting that
usually the EFCM(k,n) defined by \eqref{methods} constitutes of a
system of implicit equations for the determination of $v_i$, and the
iterative computation is required. {In this paper, we only consider
using the fixed-point iteration  in practical computation. Other
iteration methods such as waveform relaxation methods, Krylov
subspace methods and preconditioning will be analysed in a future
research. } For the convergence of the fixed-point iteration for the
EFCM(k,n)
  \eqref{methods}, we have the following result.
\begin{theorem}
\label{convergence}  Assume that   $g$ satisfies a Lipschitz
condition in the variable $u$, i.e. there exists a constant $L$ with
the property:
$$\norm{g(t,u_1)-g(t,u_2)}\leq L\norm{u_1-u_2}.$$ If
\begin{equation}0<h<\dfrac{1}{L\dfrac{Cr^2(e^{\omega}-1)}{\omega}\max\limits_{i,j=
1,\cdots, k} c_i |b_j|}, \label{Convergence condition}
\end{equation} then, the fixed-point  iteration for
the EFCM(k,n)   \eqref{methods} is convergent. Here,   $C$ and
$\omega$ are constants independent of $A$. For a  quadrature
formula, generally speaking, not all of the node points $c_i \
(i=1,2,\ldots,k)$ are equal to zero, and this ensures that
$\max\limits_{i,j= 1,\cdots, k} c_i |b_j|\neq0.$
\end{theorem}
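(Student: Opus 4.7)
\smallskip\noindent\textbf{Proof proposal.} The plan is to recast the $k$ stage equations of EFCM(k,n) as a fixed-point equation $\mathbf{v}=\Psi(\mathbf{v})$ on the product space with $\mathbf{v}=(v_1,\ldots,v_k)$, and to show that $\Psi$ is a contraction whenever the stepsize satisfies the stated bound \eqref{Convergence condition}. Concretely, I would set
\begin{equation*}
\Psi_i(\mathbf{v}):=\varphi_{0}(-c_iV)u_0+c_ih\sum_{l=1}^{k}b_l\Big(\sum_{j=0}^{n-1}I_{j,c_i}(V)\widehat{P}_j(c_l)\Big)g(c_lh,v_l),\quad i=1,\ldots,k,
\end{equation*}
endow the product space with a max-type norm $\norm{\mathbf{v}}:=\max_{l}\norm{v_l}$, subtract two iterates, and invoke the Lipschitz assumption on $g$ to arrive at
\begin{equation*}
\norm{\Psi(\mathbf{v})-\Psi(\bar{\mathbf{v}})}\leq hL\max_{i}c_i\sum_{l=1}^{k}|b_l|\,\Big\Vert\sum_{j=0}^{n-1}I_{j,c_i}(V)\widehat{P}_j(c_l)\Big\Vert\,\norm{\mathbf{v}-\bar{\mathbf{v}}}.
\end{equation*}

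The heart of the argument is an operator-norm estimate for the kernel $\sum_{j=0}^{n-1}I_{j,c_i}(V)\widehat{P}_j(c_l)$. Using the integral representation from \eqref{Ijci} I would rewrite it as
\begin{equation*}
\sum_{j=0}^{n-1}I_{j,c_i}(V)\widehat{P}_j(c_l)=\int_{0}^{1}\Big(\sum_{j=0}^{n-1}\widehat{P}_j(c_iz)\widehat{P}_j(c_l)\Big)e^{-(1-z)c_iV}\,dz,
\end{equation*}
and then apply two independent bounds: the semigroup estimate \eqref{condition A} gives $\norm{e^{-(1-z)c_iV}}\leq Ce^{\omega(1-z)c_i}$ whose integral over $z\in[0,1]$ is $(e^{\omega c_i}-1)/(\omega c_i)\leq (e^{\omega}-1)/\omega$ since $c_i\leq 1$ and the function $x\mapsto(e^{\omega x}-1)/(\omega x)$ is monotone; the pointwise sum of shifted Legendre polynomials is bounded by an appropriate constant $r^2$ coming from a Bessel/Cauchy--Schwarz style estimate together with the classical inequality $|\widehat{P}_j(x)|\leq\sqrt{2j+1}$ on $[0,1]$. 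Combining these two ingredients yields $\norm{\sum_{j=0}^{n-1}I_{j,c_i}(V)\widehat{P}_j(c_l)}\leq Cr^2(e^{\omega}-1)/\omega$ uniformly in $i,l$.

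Substituting this back and absorbing the remaining $c_i$ and $|b_l|$ into $\max_{i,j}c_i|b_j|$ (after collapsing the sum over $l$ to a max in the chosen norm), the Lipschitz factor becomes
\begin{equation*}
q:=hL\,\frac{Cr^2(e^{\omega}-1)}{\omega}\max_{i,j=1,\ldots,k}c_i|b_j|.
\end{equation*}
The hypothesis \eqref{Convergence condition} is precisely $q<1$, so the Banach fixed-point theorem delivers convergence of the fixed-point iteration, and the final observation that at least one $c_i$ is strictly positive ensures the denominator in \eqref{Convergence condition} is nonzero.

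\smallskip\noindent\emph{Main obstacle.} The delicate point is the uniform bound on the Legendre-polynomial kernel $\sum_{j=0}^{n-1}\widehat{P}_j(c_iz)\widehat{P}_j(c_l)$: one must package the $j$-dependence into a clean constant $r^2$ that is independent of the stage indices and of $V$, because the whole point is to let the stepsize threshold be insensitive to the matrix $A$. A secondary care point is to commute the operator norm with the integral and ensure that the semigroup factor $e^{-(1-z)c_iV}$ can be estimated independently of the scalar Legendre factor, which is exactly what the factorised integral representation above is designed to permit.
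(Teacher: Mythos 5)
Your proposal follows essentially the same route as the paper's proof: both recast the stage equations as a fixed-point map, bound the coefficient operators $c_ib_j\sum_{l=0}^{n-1}I_{l,c_i}(V)\widehat{P}_l(c_j)$ using the semigroup estimate $\norm{e^{-(1-z)c_iV}}\leq Ce^{\omega(1-z)c_i}$ together with $|\widehat{P}_j|\leq\sqrt{2j+1}$ (so that $r^2=\sum_{j=0}^{n-1}(2j+1)=n^2$), and conclude by the Contraction Mapping Theorem under the stated stepsize restriction. The kernel bound you flag as the main obstacle is handled in the paper exactly as you anticipate, by applying $|\widehat{P}_j|\leq\sqrt{2j+1}$ termwise rather than by any finer Bessel-type argument.
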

\begin{proof}
Following Definition \ref{numerical method}, the first formula
  of   \eqref{methods} can be rewritten as
\begin{align}
Q  & =e^{-cV}u_{0}+hA(V)g(ch,Q),
\end{align}
where $c=(c_1,c_2,\ldots,c_k)^\intercal,\
Q=(v_1,v_2,\ldots,v_k)^{\intercal},\ A(V)=(a_{ij}(V))_{k\times k}$
and $a_{ij}(V)$ are defined as
$$a_{ij}(V):=c_ib_j\sum\limits_{l=0}^ {n-1}I_{l,c_i}(V)
\widehat{P}_l(c_j).
$$
It follows from \eqref{Pj} that $|\widehat{P}_j|\leq\sqrt{2j+1}.$ We
then obtain \begin{equation*}
\begin{aligned}\norm{a_{ij}(V)}&\leq c_i|b_j|\sum\limits_{l=0}^ {n-1}\sqrt{2l+1}\int_{0}^1|\widehat{P}_l(c_iz)|\norm{e^{-(1
-z)c_iV}}dz\\
&\leq c_i|b_j|\sum\limits_{l=0}^ {n-1}(2l+1)\int_{0}^1 \norm{e^{-(1
-z)c_iV}}dz.
\end{aligned}
\end{equation*}
Furthermore, we get
 \begin{equation*}
\begin{aligned}\norm{a_{ij}(V)}
&\leq c_i|b_j|\sum\limits_{l=0}^ {n-1}(2l+1)C\int_{0}^1 e^{\omega(1
-z)}dz=Cc_i
|b_j|r^2(e^{\omega}-1)/\omega,\\
\end{aligned}
\end{equation*}
 which yields
$$\norm{A(V)}\leq \frac{Cr^2(e^{\omega}-1)}{\omega}\max\limits_{i,j=
1,\cdots, k} c_i |b_j|.$$
 Let
$$\varphi(x)=e^{-cV}u_{0}+hA(V)g(ch,x).$$ Then  we have
\begin{equation*}
\begin{aligned}
\norm{\varphi(x)-\varphi(y)} &=\norm{hA(V)g(ch,x)-hA(V)g(ch,y)} \leq
 hL\norm{A(V)} \norm{x-y}\\
 &\leq hL\frac{Cr^2(e^{\omega}-1)}{\omega}\max\limits_{i,j=
1,\cdots, k} c_i |b_j|\norm{x-y},
\end{aligned}
\end{equation*}
which shows that $\varphi(x)$ is a contraction under the assumption
\eqref{Convergence condition}. The well-known Contraction Mapping
Theorem then ensures the convergence of the fixed-point
iteration.
\end{proof}

In what follows, we discuss the convergence of the fixed-point
iteration for the HBVM(k,n) \eqref{methods0} for solving
  \eqref{prob}. When the HBVM(k,n) \eqref{methods0}  is applied to
solve
$$u^{\prime}(t)=g(t,u(t))-Au(t), \ \ \ u(0)=u_0,$$
the scheme of HBVM(k,n) becomes
\begin{equation}\label{HBVM}
\begin{aligned} v_i=&u_0+c_ih\sum\limits_{l=1}^ {k}b_l\Big(\sum\limits_{j=0}^ {n-1} \tilde{I}_{j,c_i}
\widehat{P}_j(c_l)\Big)(g(c_lh,v_l)-Av_l),\ \
i=1,2,\ldots,k,\\
v(h)&=u_0+h \sum\limits_{l=1}^ {k}b_l(g(c_lh,v_l)-Av_l).
\end{aligned}
\end{equation}
The first formula of  \eqref{HBVM} is also implicit and it requires
the iterative computation as well. Under the assumption that $g$
satisfies a Lipschitz condition in the variable $u$, in order to
analyse the convergence for the fixed-point iteration for the
formula \eqref{HBVM},  we denote  the iterative function by
$$\psi(x)=u_{0}+h\tilde{A}(g(ch,x)-Ax),$$
where $\tilde{A}=(\tilde{a}_{ij})_{k\times k}$ and
$\tilde{a}_{ij}=c_ib_j\sum\limits_{l=0}^ {n-1}\tilde{I}_{l,c_i}
\widehat{P}_l(c_j).$

Then, we have
\begin{equation*}
\begin{aligned}
\norm{\psi(x)-\psi(y)}
&=\norm{h\tilde{A}(g(ch,x)-Ax)-h\tilde{A}(g(ch,y)-Ay)}\\
 & \leq
 hL\norm{\tilde{A}} \norm{x-y}+h\norm{\tilde{A}}\norm{A} \norm{x-y}\\
 &\leq h(L+\norm{A})\max\limits_{i,j= 1,\cdots,
k}|\tilde{a}_{ij}| \norm{x-y},
\end{aligned}
\end{equation*}
which means that if
\begin{equation*}0<h<\dfrac{1}{(L+\norm{A})\max\limits_{i,j= 1,\cdots,
k}|\tilde{a}_{ij}|},
\end{equation*} then, the fixed-point  iteration for
the HBVM(k,n) is convergent.

\begin{remark} It is very clear that  the convergence of HBVM(k,n)
when applied to $u^{\prime}(t)=g(u(t))-Au(t)$ depends on $\norm{A},$
and the larger $\norm{A}$ becomes, the smaller the stepsize is
required. Whereas, it is of prime importance  to note that from
\eqref{Convergence condition}, the convergence of EFCM(k,n) is
independent of $\norm{A}$. { This fact implies that EFCMs have the
better  convergence condition than HBVMs, especially when $\norm{A}$
is large, such as when the problem \eqref{prob} is a stiff system.
This point will be numerically demonstrated by the
  experiments carried out in next section.} We also note
that an efficient implementation of HBVMs
 has been considered in  \cite{Brugnano2011} and this technique is suitable for stiff first--order  and second--order problems.
\end{remark}

 \section{A practical EFCM and numerical experiments}
 \label{numerical experiments} As an  illustrative example of  EFCMs, we choose the 2-point  Gauss--Legendre
 quadrature as the quadrature formula in \eqref{methods}, that is exact for all polynomials of degree $\leq
 3$. This means that $k=2$ in the $k$-point Gauss--Legendre
 quadrature and this case gives
\begin{equation}\begin{aligned}
&c_1=\frac{3-\sqrt{3}}{6},\ \
c_2=\frac{3+\sqrt{3}}{6},\\
&b_1=\frac{1}{2},\qquad \ \ \ \ b_2=\frac{1}{2}.\\
\label{GL}
\end{aligned}\end{equation}  Then we choose $n=2$
in \eqref{methods} and denote the corresponding exponential Fourier
collocation method as EFCM(2,2). {After some calculations, the
scheme of this method can be expressed by
\begin{equation}
\begin{aligned} v_1=&\varphi_{0}(-c_1V)u_0+
\frac{h}{6}\Big(\sqrt{3}\varphi_{1}(-c_1V)+(3-2\sqrt{3})\varphi_{2}(-c_1V)\Big)g(c_1
h,v_1)\\
&+\frac{3-2\sqrt{3}}{6}h\Big(\varphi_{1}(-c_1V)-\varphi_{2}(-c_1V)\Big)g(c_2
h,v_2),\\
 v_2=&\varphi_{0}(-c_2V)u_0+\frac{3+2\sqrt{3}}{6}h\Big(\varphi_{1}(-c_2V)-\varphi_{2}(-c_2V)\Big)
g(c_1h,v_1)\\
&+\frac{h}{6}\Big(-\sqrt{3}\varphi_{1}(-c_2V)+(3+2\sqrt{3})\varphi_{2}(-c_2V)\Big)g(c_2
h,v_2),\\
 v(h)&=\varphi_{0}(-V)u_0+\frac{h}{2}\Big((1+\sqrt{3})\varphi_{1}(-V)-2\sqrt{3}\varphi_{2}(-V)\Big)g(c_1 h,v
 _1)\\
 &+\frac{h}{2}\Big((1-\sqrt{3})\varphi_{1}(-V)+2\sqrt{3}\varphi_{2}(-V)\Big)g(c_2 h,v
 _2).
\end{aligned}
\label{method EFCM(2,2)}%
\end{equation}
}  When $A\rightarrow 0$, the method EFCM(2,2) reduces to HBVM(2,2)
given in \cite{Brugnano2012}, which coincides with the two-stage
Gauss method given  in \cite{hairer2006}. Various examples of EFCMs
can be obtained by choosing different quadrature formula and
different values of
 $n$, and we do not go further on this point in this paper for
brevity.

%
%
%

In order to   show the efficiency and robustness of the fourth order
method EFCM(2,2), the integrators we select for comparisons are also
of order four and we denote them as follows:

\begin{itemize}\itemsep=-0.2mm
\item EFCM(2,2):  the   EFCM(2,2) method of order four derived in this section;

\item HBVM(2,2):  the Hamiltonian Boundary Value
Method of order four in \cite{Brugnano2012} which coincides with the
two-stage Gauss  method in \cite{hairer2006};

\item {EPCM5s4:  the  fourth-order energy-preserving collocation
method (the case $s=2$)  in \cite{hairer2010} with the integrals
approximated by the Lobatto quadrature of order eight, which is
 precisely the ``extended Labatto
IIIA method{\index{extended Labatto IIIA method}} of order four" in
\cite{Iavernaro2009};}

\item EERK5s4:   the  explicit five-stage exponential Runge--Kutta method of order four
derived in \cite{Hochbruck2005}.

\end{itemize}

 It is noted  that   the first three  methods  are   implicit and  we
 use {one fixed-point iteration in the practical computations for showing  the
work precision diagram (the gloal error  versus the execution time)
as well as energy conservation for a Hamiltonian system. For each
problem,  we also present the requisite total numbers of iterations
for implicit methods when choosing different error tolerances in the
fixed-point iteration.}
In all the numerical experiments, the
 matrix exponential is calculated  by  the algorithm given in  \cite{Higham2009}.

 \textbf{Problem 1.} We first consider the H\'{e}non-Heiles Model which is
created for describing stellar motion (see, e.g.
\cite{Brugnano-SIAM,hairer2006}). The Hamiltonian function of the
system is given by
\[
H(p,q)=\dfrac{1}{2}(p_{1}^{2}+p_{2}^{2})+\dfrac{1}{2}(q_{1}^{2}+q_{2}%
^{2})+q_{1}^{2}q_{2}-\dfrac{1}{3}q_{2}^{3}.
\]
This is  identical to  the following first-order differential
equations
\[
\begin{aligned}
 \left(
   \begin{array}{c}
     q_{1} \\
     q_{2} \\
     p_{1} \\
     p_{2} \\
   \end{array}
 \right)'+\left(
           \begin{array}{cccc}
             0 & 0 & -1 & 0 \\
             0 & 0 & 0 & -1 \\
             1 & 0 & 0& 0 \\
             0 & 1 & 0 & 0 \\
           \end{array}
         \right)
  \left(
   \begin{array}{c}
     q_{1} \\
     q_{2} \\
     p_{1} \\
     p_{2} \\
   \end{array}
 \right)=\left(
   \begin{array}{c}
    0 \\
     0 \\
    -2q_1q_2 \\
    -q_1^2+q_2^2 \\
   \end{array}
 \right).\
\end{aligned}
\]
The initial values are chosen as
$$\big(q_1(0),q_2(0),p_1(0),p_2(0)\big)^{\intercal}=\Big(\sqrt{\dfrac{11}{96}},0,0,\dfrac{1}{4}\Big)^{\intercal}.$$
It is noted that we use the result of the standard ODE45  in MATLAB
as the true solution for this problem and the next problem.
 We first solve the problem in the interval
$[0, 1000]$ with different stepsizes {$h=1/2^{i},\ i =2, 3,4,5$. The
work-precision diagram is}  presented in Figure \ref{fig1-1} (i).
Then, we integrate this problem with the stepsize $h=1.5$ in the
interval $[0, 3000].$ See Figure \ref{fig1-1} (ii) for the energy
conservation for different methods. { We also solve the problem in
$[0, 10]$ with  $h=0.01$ by the three implicit methods and display
the total numbers of iterations in Table \ref{pro1 tab} for
different error tolerances (tol) chosen in the fixed-point
iteration.}

\begin{figure}[ptb]
\centering\tabcolsep=2mm
\begin{tabular}
[c]{cccc}%
\includegraphics[width=5cm,height=6cm]{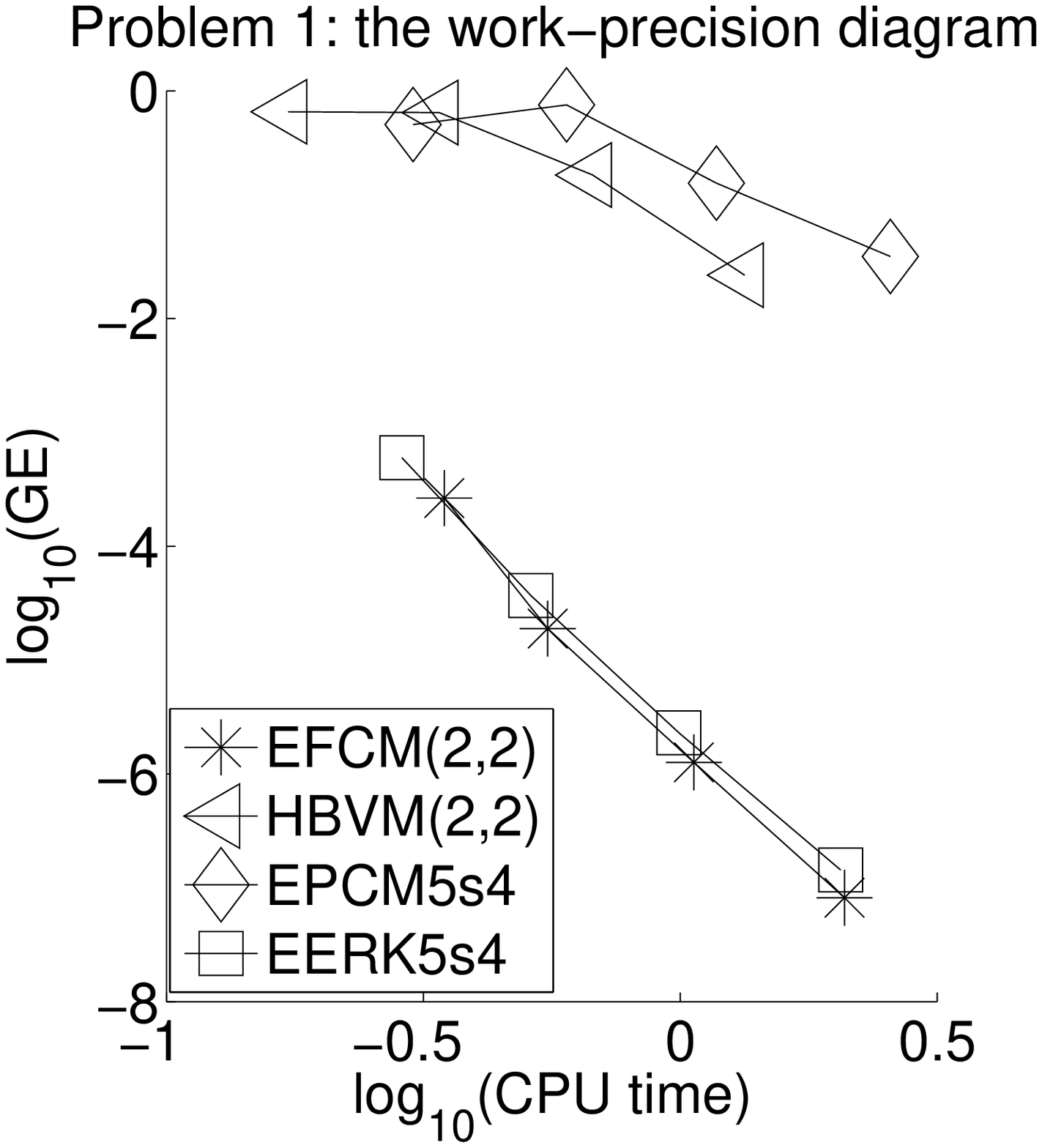} &
\includegraphics[width=5cm,height=6cm]{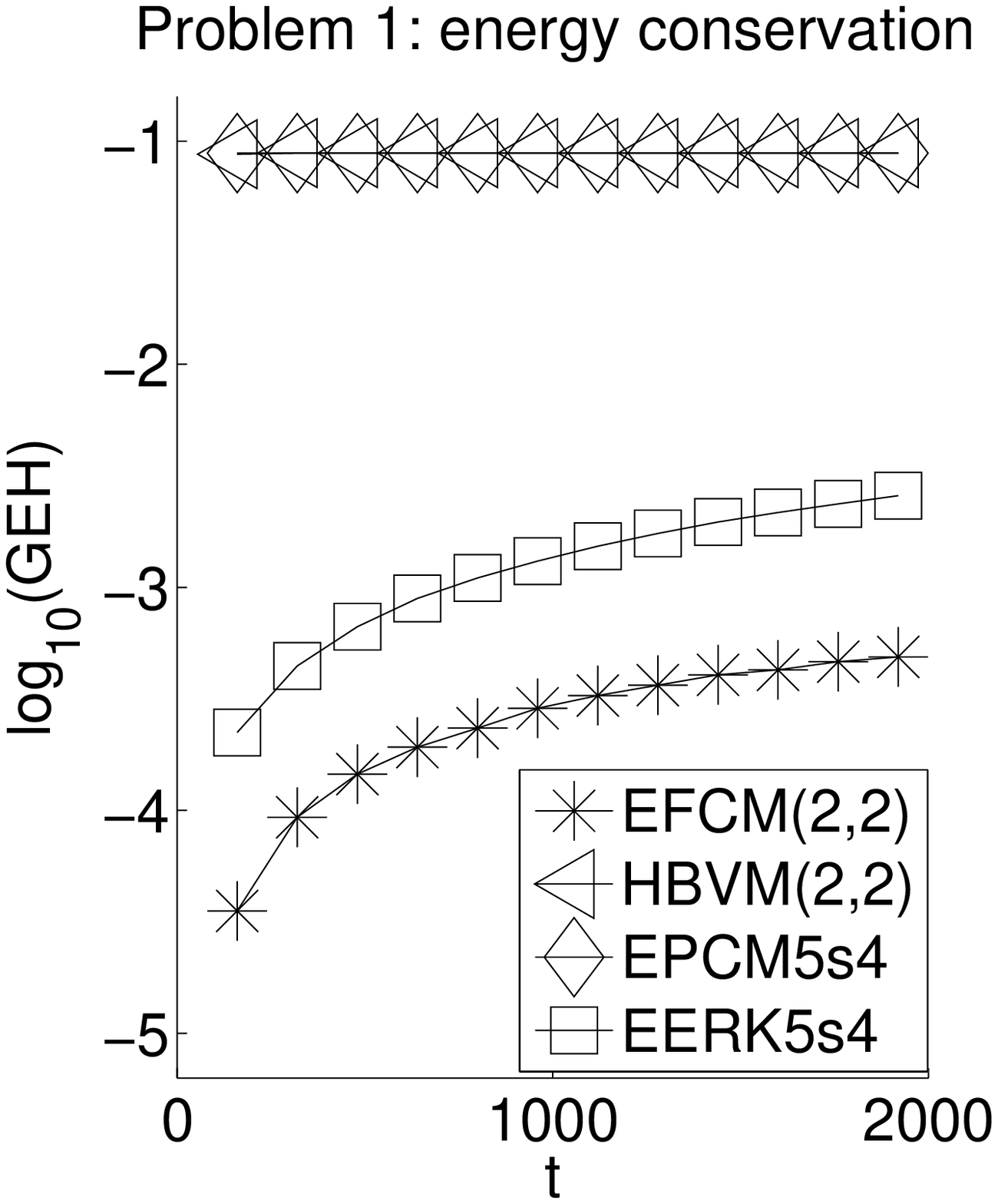} &\\
{\small (i)} & {\small (ii)}
\end{tabular}
\caption{Results for Problem 1. (i): The $\log$-$\log$ plot of the
maximum global error ($GE$) over the integration interval against
{the execution time.} (ii):\ The logarithm of the global error of
Hamiltonian
$GEH=|H_{n}-H_{0}|$ against $t$.}%
\label{fig1-1}%
\end{figure}

\begin{table}$$
\begin{array}{|c|c|c|c|c|c|}
\hline
\text{Methods} &tol=1.0e-006  &tol=1.0e-008   &tol=1.0e-010 &tol=1.0e-012    \\
\hline
\text{EFCM(2,2)} &  2000 &   2000&  2000 &  3000
 \cr
\text{HBVM(2,2)}  &2000 & 3000 &  3769 & 4000\cr
\text{EPCM5s4}   &2000 & 3000 &  4000 & 4999\cr
 \hline
\end{array}
$$
\caption{Results for Problem 1. The total  numbers of iterations for
different error tolerances (tol).} \label{pro1 tab}
\end{table}

\textbf{Problem 2.} The Fermi--Pasta--Ulam problem is an important
model for simulations in statistical mechanics which is considered
in \cite{Lubich2006,hairer2000,hairer2006,wu2012-3,wu-2012-BIT}. It
is a Hamiltonian system with the Hamiltonian
\[%
\begin{aligned}%
H(x,y) &
=\dfrac{1}{2}\textstyle\sum\limits_{i=1}^{2m}y_{i}^{2}+\dfrac
{\omega^{2}}{2}\textstyle\sum\limits_{i=1}^{m}x_{m+i}^{2}+\dfrac{1}{4}%
\Big[(x_{1}-x_{m+1})^{4}\\
& +\textstyle\sum\limits_{i=1}^{m-1}(x_{i+1}-x_{m+i-1}-x_{i}-x_{m+i}%
)^{4}+(x_{m}+x_{2m})^{4}\Big].
\end{aligned}
\]
 This results in
\begin{equation}
\begin{aligned} &\left(
                   \begin{array}{c}
                     x \\
                     y \\
                   \end{array}
                 \right)'
+\left(\begin{array}{cc}
\mathbf{0}_{2m\times 2m} &-I_{2m}\\
M&\mathbf{0}_{2m\times 2m}
\end{array}\right)
 \left(
                   \begin{array}{c}
                     x \\
                     y \\
                   \end{array}
                 \right)
=\left(
   \begin{array}{c}
     0 \\
    -\nabla U(x) \\
   \end{array}
 \right)
, \qquad t\in[0,t_{\textmd{end}}],\end{aligned}
\label{first order-dim-prob}%
\end{equation}
where
\begin{gather*}
M=\left(
\begin{array}
[c]{cc}%
\mathbf{0}_{m\times m} & \mathbf{0}_{m\times m}\\
\mathbf{0}_{m\times m} & \omega^{2}I_{m\times m}%
\end{array}
\right)  ,\\
U(x)=\dfrac{1}{4}\Big[(x_{1}-x_{m+1})^{4}+\textstyle\sum\limits_{i=1}%
^{m-1}(x_{i+1}-x_{m+i-1}-x_{i}-x_{m+i})^{4}+(x_{m}+x_{2m})^{4}\Big].
\end{gather*}
We choose
\[
m=3,\ \omega=50,\ x_{1}(0)=1,\ y_{1}(0)=1,\
x_{4}(0)=\dfrac{1}{\omega},\ y_{4}(0)=1,
\]
and choose zero for the remaining initial values.   The system is
integrated in the interval $[0,10]$ with {the stepsizes $h= 1/2^k,\
k=3,4,5,6.$ We plot the work-precision diagram}   in Figure
\ref{fig2} (i).
 Then, we solve this problem in the interval $[0,1000]$
 with the stepsize {$h=1/10$} and present the energy conservation in Figure \ref{fig2} (ii).
  Here, it is noted that we do not plot some points in  Figure \ref{fig2}   when
  the errors of the
  corresponding numerical results are too large. Similar situation occurs in the next
  two problems. Furthermore, we solve the problem in   $[0, 10]$ with
$h=0.01$ to show the  convergence rate of iterations for the three
implicit methods. Table \ref{pro2 tab} lists the total numbers of
iterations for different error tolerances.
\begin{figure}[ptb]
\centering\tabcolsep=2mm
\begin{tabular}
[c]{cccc}%
\includegraphics[width=5cm,height=6cm]{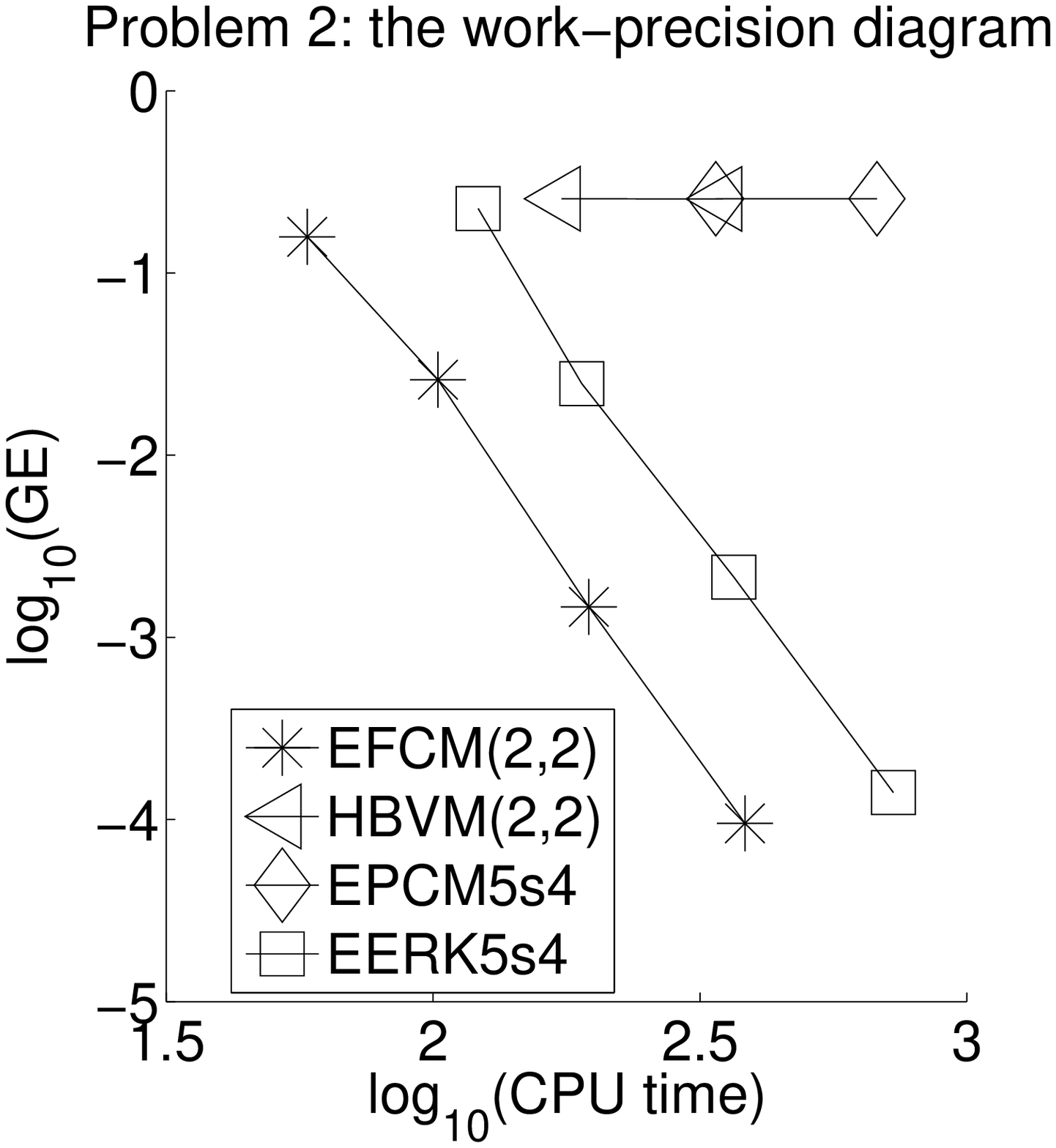} &
\includegraphics[width=5cm,height=6cm]{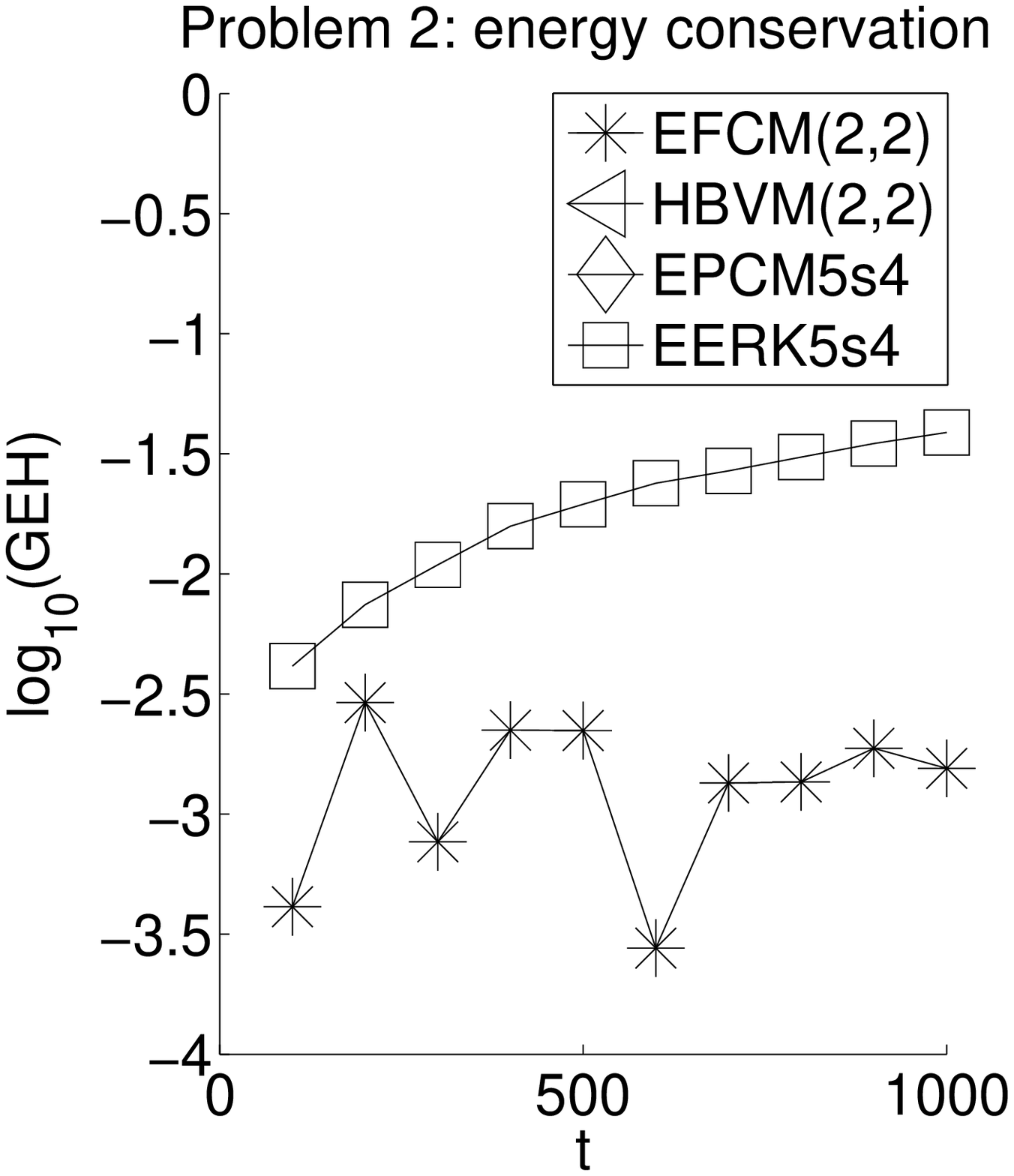} &\\
{\small (i)} & {\small (ii)}
\end{tabular}
\caption{Results for Problem 2. (i): The $\log$-$\log$ plot of the
maximum global error ($GE$) over the integration interval  against
{the execution time.} (ii):\ The logarithm of the global error of
Hamiltonian
$GEH=|H_{n}-H_{0}|$ against $t$.}%
\label{fig2}%
\end{figure}
\begin{table}$$
\begin{array}{|c|c|c|c|c|c|}
\hline
\text{Methods} &tol=1.0e-006  &tol=1.0e-008   &tol=1.0e-010 &tol=1.0e-012    \\
\hline
\text{EFCM(2,2)} &2000&2080&2998&3027 \cr
\text{HBVM(2,2)}  & 6801     &   9291    &   10980    &   13912\cr
\text{EPCM5s4}   &   9937    &    11925  &      14844   &     16945\cr
 \hline
\end{array}
$$
\caption{Results for Problem 2. The total  numbers of iterations for
different error tolerances (tol).} \label{pro2 tab}
\end{table}

\textbf{Problem 3.} Consider the semilinear parabolic problem (this
problem has been considered in \cite{Hochbruck2005})
\[
\frac{\partial u}{\partial t}(x,t)=\dfrac{\partial^{2}u}{\partial
x^{2}}(x,t)+\frac{1}{1+u(x,t)^2}+\Phi(x,t)
\]
for $x\in[0,1]$ and $t\in[0,1],$ subject to homogeneous Dirichlet
boundary conditions. The source function $\Phi(x,t)$ is chosen in
such a way that the exact solution of the problem is
$u(x,t)=x(1-x)\textmd{e}^t$.

We discretise this problem in space by using second-order symmetric
differences with 1000 grid points. The problem is solved in the
interval $[0, 1]$ with different stepsizes $h=1/2^{i},\ i =2,3,4,5.$
The work-precision diagram is  presented in Figure
\ref{fig3-NEW}. 
Then, the problem is solved in   $[0, 1]$ with $h=\frac{1}{10}$ to
show the convergence rate of iterations. See Table \ref{pro3-NEW
tab} for the total numbers of iterations for different error
tolerances.

\begin{figure}[ptb] \centering\tabcolsep=2mm
\begin{tabular}
[c]{cccc}%
\includegraphics[width=5cm,height=6cm]{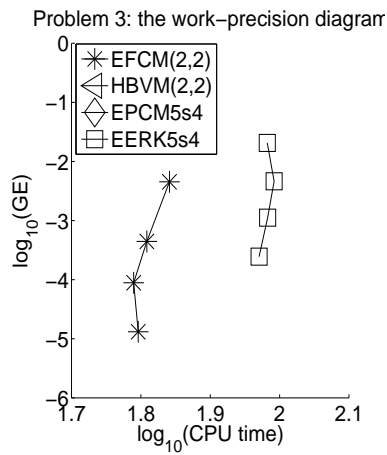}
\end{tabular}
\caption{Results for Problem 3. The $\log$-$\log$ plot of the
maximum global error ($GE$) over the integration
interval against  {the execution time.}}%
\label{fig3-NEW}%
\end{figure}

\begin{table}$$
\begin{array}{|c|c|c|c|c|c|}
\hline
\text{Methods} &tol=1.0e-006  &tol=1.0e-008   &tol=1.0e-010 &tol=1.0e-012    \\
\hline
\text{EFCM(2,2)} &40   & 50  &  60  &  73 \cr
\text{HBVM(2,2)}  &    86 &   86   & 86  &  86\cr
\text{EPCM5s4}  &  87  &  87 &   87  &  87\cr
 \hline
\end{array}
$$
\caption{Results for Problem 3. The total  numbers of iterations for
different error tolerances (tol).} \label{pro3-NEW tab}
\end{table}

From the results, it can be clearly observed   that the novel method
EFCM(2,2) provides a considerably more accurate numerical solution
than other methods and preserves well the Hamiltonian energy when
solving Hamiltonian  systems. Moreover, our method EFCM(2,2)
requires less fixed-point iterations than both HBVM(2,2) and
EPCM5s4, which is important in long-term computations.

\section{Conclusions} \label{sec:conclusions}

In this paper, we  formulated and analysed the novel   methods EFCMs
for solving systems of first-order differential
 equations. The novel EFCMs  are  an efficient  kind of exponential integrators, and their construction takes full advantage of
   the variation-of-constants formula, the local Fourier expansion
and  collocation methods.  We discussed the connections with HBVMs,
Gauss methods,   Radau IIA methods and TFCMs.  It turned out that
 the first three traditional methods can be attained
by letting $A\rightarrow 0$ in the corresponding EFCMs, and applying
EFCMs to the second-order oscillatory differential equation
\eqref{old-prob} yields TFCMs. The properties of EFCMs were also
analysed and it was shown that the new EFCMs can reach arbitrarily
high order in a very convenient and simple way.  A practical scheme
of EFCMs was constructed in this paper. The numerical experiments
were carried out and the results affirmatively demonstrate
 that the novel EFCMs have excellent numerical behaviour in
comparison with some existing effective methods in the scientific
literature.

This is a preliminary  research on  EFCMs for first-order ordinary
differential equations  and the authors are  clearly aware that
there are still  some issues which will  be further considered:
\begin{itemize}\itemsep=-0.2mm

\item {The error bounds and
convergence properties of EFCMs for linear and semilinear
 problems will be discussed in another work. }

\item  For the EFCM(k,n) \eqref{methods},   it is assumed that
$k\geq n$ in this paper.   EFCMs with   $k<n$  will be discussed and
 this case maybe not affect  the computational cost associated with the
implementation of the methods for some special systems.   Some
equations and unknowns in the methods may be  removed and we will
consider the efficient implementation of  the novel EFCMs  in a
future research.

\item We only consider the fixed-point
iteration for the EFCMs in this paper. Other iteration methods such
as waveform relaxation methods, Krylov subspace methods and
preconditioning as well as their  actual implementation for EFCMs
will be analysed in future.

\item The shifted Legendre polynomials are  chosen as  an orthonormal
basis to give the Fourier expansion of the function $g(t,u(t))$. We
observe that a different choice of the orthonormal basis would
modify the arguments presented in this paper.  The scheme of the
numerical methods as well as their analysis is then  modified
accordingly.
 Different choices
of the orthonormal basis will be considered in future
investigations.

\item Another issue for future exploration is the application  of our   methodology in   other differential equations
such as   Sch\"{o}rdinger equations and other stiff PDEs.

\end{itemize}

\noindent {\bf Acknowledgments.} Bin Wang was supported  by National
Natural Science Foundation of China (Grant No. 11401333), by Natural
Science Foundation of Shandong Province (Grant No. ZR2014AQ003) and
by China Postdoctoral Science Foundation (Grant No. 2015M580578).
Xinyuan Wu was supported by  National Natural Science Foundation of
China (Grant No. 11271186), by NSFC and RS International Exchanges
Project (Grant No. 113111162), by Specialized Research Foundation
for the Doctoral Program of Higher Education (Grant No.
20130091110041), by 985 Project at Nanjing University (Grant No.
9112020301), by A Project Funded by the Priority Academic Program
Development of Jiangsu Higher Education Institutions. Fanwei Meng
was supported  by National Natural Science Foundation of China
(Grant No. 11171178).  Yonglei Fang was partially supported by
National Natural Science Foundation of China (Grant No. 11571302)
and the foundation of Scientific Research Project of Shandong
Universities (Grant No. J14LI04).

\end{document}